\theoremstyle{plain}
\newtheorem{theorem}{Theorem}[section]
\newtheorem{lemma}[theorem]{Lemma}
\newtheorem{corollary}[theorem]{Corollary}
\theoremstyle{definition}
\newtheorem{example}[theorem]{Example}
\newtheorem{assumption}[theorem]{Assumption}
\theoremstyle{remark}
\newtheorem{remark}[theorem]{Remark}
 \numberwithin{equation}{section}
\numberwithin{table}{section}
\title{Quadratic convergence of an SQP method for some optimization problems with applications to control theory\thanks{The authors were supported by MICIU/AEI/10.13039/501100011033/ under research project PID2023-147610NB-I00.}}
\author{Eduardo Casas\thanks{Departamento de Matem\'{a}tica Aplicada y Ciencias de la Computaci\'{o}n, E.T.S.I. Industriales y de Telecomunicaci\'on, Universidad de Cantabria, 39005 Santander, Spain
(\texttt{eduardo.casas@unican.es})},
\and
Mariano Mateos\thanks{Departamento de Matem\'{a}ticas, Campus de Gij\'on, Universidad de Oviedo, 33203, Gij\'on, Spain (\texttt{mmateos@uniovi.es})}
}
\newcommand{\dx}{\,\mathrm{d}x}
\newcommand{\dt}{\,\mathrm{d}t}
\newcommand{\dmu}{\,\mathrm{d}\mu}
\newcommand{\Pb}{\mbox{\rm (P)}\xspace}
\newcommand{\Pbu}{\mbox{\rm (P$_1$)}\xspace}
\newcommand{\Pbd}{\mbox{\rm (P$_2$)}\xspace}
\newcommand{\Pbt}{\mbox{\rm (P$_3$)}\xspace}
\newcommand{\Pbc}{\mbox{\rm (P$_4$)}\xspace}
\newcommand{\uad}{U_{\rm ad}}
\newcommand{\dimension}{d}
\newcommand{\conormal}{n\!}
\newcommand{\tikhonov}{\kappa}
\newcommand{\mA}{\mathcal{A}}
\newcommand{\mJ}{\mathcal{J}}
\newcommand{\mQ}{\mathcal{Q}}
\newcommand{\taup}{\tau^+}
\newcommand{\taum}{\tau^-}
\newcommand{\proj}{\operatorname{Proj}}
\newcommand{\umin}{{\alpha}}
\newcommand{\umax}{{\beta}}
\begin{document}

\maketitle

\begin{abstract}
We analyze a sequential quadratic programming algorithm for solving a class of abstract optimization problems. Assuming that the initial point is in an $L^2$ neighborhood of a local solution that satisfies no-gap second-order sufficient optimality conditions and a strict complementarity condition, we obtain stability and quadratic convergence in $L^q$ for all $q\in[p,\infty]$ where $p\geq 2$ depends on the problem. Many of the usual optimal control problems of partial differential equations fit into this abstract formulation. Some examples are given in the paper. Finally, a computational comparison with other versions of the SQP method is presented.
\end{abstract}

\begin{quote}
\textbf{Keywords:}
sequential quadratic programming, optimal control of partial differential equations, optimality conditions, strict complementarity
\end{quote}

\begin{quote}
\textbf{AMS Subject classification: }
49M15,  
49M05, 
49M41, 
35Q93, 
\end{quote}

\section{Introduction}\label{S1}
\setcounter{equation}{0}
This paper is devoted to the analysis of an SQP method for the abstract optimization problem
\[
\Pb\qquad\min_{u\in\uad} J(u): = \mathcal{J}(u)+\frac{\tikhonov}{2}\|u\|_{L^2(X)}^2.
\]
Assumptions on the sets and functionals involved are made in a way that allows us to apply the results to a wide variety of control-constrained optimal control problems.

To our knowledge, the papers \cite{Goldberg-Troltz1998,Troltz1999} are among the first to study SQP methods for solving control-constrained optimal control problems governed by partial differential equations. Although the results presented in these papers are not directly applicable to other optimal control problems, the proof technique has been successfully applied to problems governed by different PDEs; see e.g. \cite{Wachsmuth2007,Hoppe-Neitzel2021,Hehl-Neitzel2024,Ammann-Yousept2025}. In all these references, the authors describe and study a method to solve the complete optimality system of state equation, adjoint state equation, and variational inequality using the three variables state, adjoint state, and control. This is sometimes called the Lagrange-Newton method. Their proofs of the quadratic convergence of the algorithm are based on a strong second-order sufficient optimality condition satisfied by the optimal control. In contrast, we will analyze an SQP method that focuses on the variational inequality. We use only the control as the optimization variable, while the state and adjoint state are taken as functions of the control. Our proof of the quadratic convergence of the algorithm is based on a no-gap second-order sufficient optimality condition and a strict complementarity condition, which are the usual assumptions in the case of finite dimensional optimization problems. Following \cite{Alt1994} or \cite{Casas2024}, we analyze the method for an abstract problem and then apply the results to some optimal control problems of partial differential equations, including problems governed by semilinear elliptic and parabolic equations, and Navier-Stokes equations, with boundary or distributed controls acting either in an additive or multiplicative (bilinear) way.

The outline of the paper is as follows. The assumptions and some results about \Pb are given in Section \ref{S2}, and the convergence analysis of the SQP is done in Section \ref{ZZ-S3}.  In Section \ref{ZZ-S4}, we show several examples of control-constrained optimal control problems that fit into our abstract framework. Finally, in Section \ref{ZZ-S5} the method presented in this paper is compared with the Lagrange-Newton method for two of the proposed problems. Numerical experience shows that our method can be more efficient for some problems, mainly in the case of evolutionary partial differential equations.

\section{An abstract optimization problem}\label{S2}
\setcounter{equation}{0}
Let $(X,\mathcal{S},\mu)$ be a measure space with $\mu(X)<\infty$. Given $r\geq 1$, $\rho>0$, and $u\in L^r(X)$, we denote $B^r_\rho(u) = \{v\in L^r(X):\ \|v-u\|_{L^r(X)}\leq\rho\}$.

Let $\mathcal{J}:L^p(X) \longrightarrow \mathbb{R}$ be a function of class $C^2$ for some $p\in[2,+\infty]$, $\tikhonov>0$, and  $-\infty\leq\umin<\umax\leq +\infty$. If $p>2$, we also require $-\infty <\umin<\umax< +\infty$. Our goal is to analyze the SQP method to solve the following optimization problem
\[
\Pb\qquad\min_{u\in\uad} J(u): = \mathcal{J}(u)+\frac{\tikhonov}{2}\|u\|_{L^2(X)}^2,
\]
where $\uad = \{u\in L^p(X):\ \umin\leq u\leq \umax \text{ a.e. }[\mu]\}$. As in \cite{Casas2024}, we assume that there exists an open set $\mathcal{A}\subset L^p(X)$ such that $\uad\subset\mathcal{A}$, $J:\mathcal{A}\to\mathbb{R}$ is of class $C^2$, and the following assumptions hold:

\begin{assumption}\label{A2.1} There exists a $C^1$ mapping $\Phi:\mathcal{A}\to L^\infty(X)$ such that
\[\mathcal{J}'(u)v = \int_X \Phi(u) v\dmu\ \forall u\in \mathcal{A}\text{ and }\ \forall v\in L^p(X).\]
\end{assumption}
We note for later reference that since $\Phi$ is $C^1$, $\Phi$ is also locally Lipschitz. This means that for every $u\in\mathcal{A}$ there exist $\rho_u>0$ with $B^p_{\rho_u}(u)\subset\mathcal{A}$ and constants $M_{u,\Phi'}>0$ and $L_{u,\Phi}>0$ such that
\begin{align}
\|\Phi(w)-\Phi(u) \|_{L^\infty(X)} \leq & L_{u,\Phi} \|w-u\|_{L^p(X)}&& \forall w \in B^p_{\rho_u}(u),
\label{E2.1}\\
 \|\Phi'(w)v\|_{L^\infty(X)}\leq & M_{u,\Phi'}\|v\|_{L^p(X)}&&
\forall w\in B^p_{\rho_u}(u) \ \text{ and }\ \forall v\in L^p(X).
\label{E2.2}
\end{align}
Since $\mJ$ is of class $C^2$, we have that for every $u\in\mA$, $\mJ''(u): L^p(X)\times L^p(X) \longrightarrow \mathbb{R}$ is a symmetric and continuous bilinear form which satisfies the following identity:
\begin{equation}\label{E2.3}
\mJ''(u)(v_1,v_2)=\int_X[\Phi'(u)v_1]v_2\dmu = \int_X[\Phi'(u)v_2]v_1\dmu\quad \forall v_1,v_2\in L^p(X).
\end{equation}
As usual, we will write $\mJ''(u)v^2=\mJ''(u)(v,v)$.

\begin{assumption}\label{A2.2}
For every $u\in\mathcal{A}$ the linear mapping $\Phi'(u):L^p(X)\longrightarrow L^\infty(X)$ has an extension to a compact operator
$\Phi'(u):L^2(X)\longrightarrow L^2(X)$. Furthermore, for every $\varepsilon > 0$ there exists $\rho = \rho_{\varepsilon,u} >0$ with $B^p_\rho(u)\subset\mA$ such that
\begin{equation}
\label{E2.4}
\|(\Phi'(u)-\Phi'(w))v\|_{L^2(X)}\leq \varepsilon \|v\|_{L^2(X)}\ \forall w\in B_\rho^p(u)\text{ and }\forall v\in L^2(X).
\end{equation}

\end{assumption}
This assumption implies that for all $u\in\mA$ we can extend $\mJ''(u)$ to a continuous bilinear form on $L^2(X)\times L^2(X)$ defined by the expression \eqref{E2.3}.

\begin{assumption}\label{A2.3}There  exist $N\geq 0$ and numbers $2=p_0 \leq p_1 \leq \ldots \leq p_N = p$ such that
for every $u\in\mathcal{A}$, the linear mapping $\Phi'(u):L^p(X)\longrightarrow L^\infty(X)$ defines also a continuous operator $\Phi'(u):L^{p_{i-1}}(X)\longrightarrow L^{p_i}(X)$ for $i=1,\ldots,N$. For ease of writing, we will set $p_{N+1} = +\infty$.
\end{assumption}
We will further suppose that the derivative of $\Phi$ is Lipschitz continuous in the following sense.

\begin{assumption}\label{A2.4}For every $u\in\mathcal{A}$, there exists $\rho_u>0$ with $B^p_{\rho_u}(u)\subset\mathcal{A}$ and a constant $L_{u,\Phi'}$ such that
\begin{equation}\label{E2.5}
  \| ( \Phi'(w)-\Phi'(u) \big) v\|_{L^\infty(X)} \leq L_{u,\Phi'} \|w-u\|_{L^{p}(X)} \|v\|_{L^p(X)}
\end{equation}
for all $(w,v) \in B^p_{\rho_u}(u) \times L^{p}(X)$.
\end{assumption}

\begin{remark}\label{R2.5}
Assumption \ref{A2.1} is the same as \cite[Hypothesis (H1)]{Casas2024}. Assumptions \ref{A2.2} and \ref{A2.3} generalize \cite[Hypothesis (H2)]{Casas2024}.
Assumption \ref{A2.3} may seem a little strange, because in many cases it is satisfied with $N = 0$ or $N = 1$. However, the reader is referred to the example given in Section \ref{S3.5} for a control problem where we need $N = 3$.
\end{remark}

Using the finiteness of $\umin$ and $\umax$ when $p>2$, we can conclude that for $p < \infty$, $\bar u$ is a local minimizer of \Pb in the sense of $L^p(X)$ if and only if it is also a local minimizer in the sense of $L^2(X)$. In the rest of the paper, local minimizers will be understood in the sense of $L^2(X)$. Every local minimizer $\bar u\in \uad$ satisfies
\begin{equation}\label{E2.6}
  \int_X (\Phi(\bar u) + \tikhonov \bar u)(u-\bar u)\dmu \geq 0\quad \forall u\in\uad.
\end{equation}
The following pointwise expression is a well known consequence of \eqref{E2.6}.
\begin{equation}\label{E2.7}
\bar u(x) =\proj_{[\umin,\umax]}\left[-\frac{1}{\tikhonov}\Phi(\bar u)(x)\right]\text{ for a.a.}\, [\mu]\,  x\in X.
\end{equation}
This identity and Assumption \ref{A2.1} imply that any local minimizer $\bar u\in\uad$ satisfies $\bar u\in L^\infty(X)$ even in the case $p=2$ and $\umin = -\infty$ or $\umax = +\infty$.

Hereafter $\bar u$ denotes a local minimizer of \Pb. Associated with $\bar u$, we define the cone of critical directions
\[
C_{\bar u} = \Big\{v\in L^2(X):\left\{
\begin{array}{l} v(x)\geq 0\text{ if }\bar u(x)=\umin,\\ v(x)\leq 0\text{ if }\bar u(x)=\umax,\\ v(x) = 0\text{ if }\Phi(\bar u)(x) + \tikhonov\bar u(x) \neq 0
\end{array}
\right.\ \text{a.e. }[\mu]
\Big\}.
\]

\begin{assumption}\label{A2.6}
The local minimizer $\bar u\in\uad $ satisfies the second-order sufficient optimality condition $J''(\bar u)v^2>0\ \forall v\in C_{\bar u}\setminus\{0\}$ and the strict complementarity condition
\[
\mu\big(\{x\in X: \bar u(x)\in\{\umin,\umax\}\text{ and }\tikhonov \bar u(x) +\Phi(\bar u)(x)=0\}\big) =0.
\]
\end{assumption}
For every $\tau\geq 0$ we define the sets
\[
X_{\bar u}^{\taup} =  \{x\in X:\ \Phi(\bar u)(x) + \tikhonov\bar u(x) > \tau\},\
X_{\bar u}^{\taum} =  \{x\in X:\ \Phi(\bar u)(x) + \tikhonov\bar u(x) < -\tau\},
\]
$X^\tau_{\bar u}  =  X_{\bar u}^{\taup} \cup X_{\bar u}^{\taum}$, and the vector space
$E_{\bar u}^\tau = \{v\in L^2(X): v(x) = 0\text{ if }x\in X^{\tau}_{\bar u}\}$.
From \eqref{E2.7} we get that $\bar u(x)=\umin$ if $x\in X_{\bar u}^{\taup}$ and $\bar u(x)=\umax$ if $x\in X_{\bar u}^{\taum}$ a.e. $[\mu]$.

\begin{lemma}\label{L2.7}
Let $\rho_0 > 0$ be chosen such that \eqref{E2.1}, \eqref{E2.2} and the assumption \ref{A2.4} hold with $u = \bar u$ for the same radius $\rho_0$. Under the assumptions \ref{A2.1}, \ref{A2.2}, and \ref{A2.6}, there exist $\nu>0$,  $\tau>0$, and $\rho_{\textsc{ssc}}\in(0,\rho_0)$ such that
\begin{align}
&J''(\bar u) v^2\geq  \nu\|v\|_{L^2(X)}^2\ \forall v\in E_{\bar u}^\tau, \label{E2.8}\\
&J''( u) v^2\geq \frac{\nu}{2}\|v\|_{L^2(X)}^2\ \ \forall v\in E_{\bar u}^\tau\ \text{ and }\ \forall u\in B^p_{\rho_{\textsc{ssc}}}(\bar u). \label{E2.9}
\end{align}
\end{lemma}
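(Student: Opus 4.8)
The plan is to reduce everything to the splitting $J''(u)v^2=\mJ''(u)v^2+\tikhonov\|v\|_{L^2(X)}^2$, in which, by \eqref{E2.3} and Assumption \ref{A2.2}, the symmetric form $\mJ''(\bar u)$ is represented on $L^2(X)$ by the compact operator $\Phi'(\bar u)$; write $(\cdot,\cdot)$ for the inner product of $L^2(X)$, so that $\mJ''(\bar u)v^2=(\Phi'(\bar u)v,v)$. A preliminary observation, which is where the strict complementarity part of Assumption \ref{A2.6} enters, is that the critical cone coincides with the space obtained by letting $\tau\downarrow 0$: the sign constraints defining $C_{\bar u}$ can only be active on $\{\bar u\in\{\umin,\umax\}\}$, which up to a $\mu$-null set is contained in $\{\Phi(\bar u)+\tikhonov\bar u\neq 0\}$, so that $C_{\bar u}=\{v\in L^2(X):\ v=0 \text{ a.e. on } \{\Phi(\bar u)+\tikhonov\bar u\neq 0\}\}=\bigcap_{\tau>0}E_{\bar u}^\tau$.

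To prove \eqref{E2.8} I would argue by contradiction and compactness. For $\tau>0$ set $m(\tau)=\inf\{J''(\bar u)v^2:\ v\in E_{\bar u}^\tau,\ \|v\|_{L^2(X)}=1\}$, which is non-increasing in $\tau$ because $E_{\bar u}^\tau$ grows with $\tau$. It suffices to exhibit one $\tau>0$ with $m(\tau)>0$, since then \eqref{E2.8} holds with that $\tau$ and $\nu=m(\tau)$ by homogeneity. Suppose instead that $m(\tau)\le 0$ for every $\tau>0$. Then for each $n$ there is $v_n\in E_{\bar u}^{1/n}$ with $\|v_n\|_{L^2(X)}=1$ and $J''(\bar u)v_n^2<1/n$. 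Passing to a subsequence, $v_n\rightharpoonup v$ weakly in $L^2(X)$. For each fixed $\tau_1>0$ and all $n$ large, $v_n$ vanishes on $X_{\bar u}^{\tau_1}$, hence lies in the closed subspace $E_{\bar u}^{\tau_1}$; therefore so does the weak limit, and intersecting over $\tau_1>0$ together with the identification above gives $v\in C_{\bar u}$. Compactness of $\Phi'(\bar u)$ gives $\mJ''(\bar u)v_n^2=(\Phi'(\bar u)v_n,v_n)\to(\Phi'(\bar u)v,v)=\mJ''(\bar u)v^2$, so that $J''(\bar u)v_n^2\to \mJ''(\bar u)v^2+\tikhonov=J''(\bar u)v^2+\tikhonov(1-\|v\|_{L^2(X)}^2)$. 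If $v=0$ this limit equals $\tikhonov>0$, while if $v\neq 0$ it is at least $J''(\bar u)v^2>0$ by the second-order condition in Assumption \ref{A2.6}; in either case it is strictly positive, contradicting $J''(\bar u)v_n^2<1/n\to 0$.

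Finally, \eqref{E2.9} follows from \eqref{E2.8} by the uniform estimate \eqref{E2.4}. For $u\in B^p_\rho(\bar u)$ and $v\in E_{\bar u}^\tau$ one has $J''(u)v^2-J''(\bar u)v^2=((\Phi'(u)-\Phi'(\bar u))v,v)$, whose absolute value is bounded by $\|(\Phi'(u)-\Phi'(\bar u))v\|_{L^2(X)}\|v\|_{L^2(X)}$. Applying \eqref{E2.4} with $\varepsilon=\nu/2$ yields $J''(u)v^2\ge J''(\bar u)v^2-\tfrac{\nu}{2}\|v\|_{L^2(X)}^2\ge \tfrac{\nu}{2}\|v\|_{L^2(X)}^2$ for all $u\in B^p_{\rho_{\nu/2,\bar u}}(\bar u)$, so \eqref{E2.9} holds with $\rho_{\textsc{ssc}}=\min\{\rho_0,\rho_{\nu/2,\bar u}\}$, shrunk to lie strictly below $\rho_0$ if necessary.

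The delicate step is the extension carried out in the second paragraph: positivity is available only on the critical cone $C_{\bar u}$, whereas \eqref{E2.8} is demanded on the strictly larger space $E_{\bar u}^\tau$. The compactness argument closes only because strict complementarity forces the weak limit of the minimizing sequence back into $C_{\bar u}$, where Assumption \ref{A2.6} applies, and because the Tikhonov term $\tikhonov\|\cdot\|_{L^2(X)}^2$ supplies a positive lower bound in the degenerate case $v=0$, precisely where the compact part $\mJ''(\bar u)$ carries no information. By comparison, the transition to nearby $u$ in \eqref{E2.9} is routine once the uniform smallness estimate \eqref{E2.4} is in hand.
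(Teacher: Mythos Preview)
Your proof is correct. For \eqref{E2.9} it is essentially identical to the paper's: both invoke \eqref{E2.4} with $\varepsilon=\nu/2$ to bound $|(J''(\bar u)-J''(u))v^2|$ and then combine with \eqref{E2.8}. The only difference is in \eqref{E2.8}: the paper does not argue this inequality at all but simply cites \cite[Theorem~2.3]{Casas2024}, whereas you supply a direct compactness argument. Your argument---identifying $C_{\bar u}=\bigcap_{\tau>0}E_{\bar u}^\tau$ via strict complementarity, taking a unit-norm minimizing sequence $v_n\in E_{\bar u}^{1/n}$, and exploiting compactness of $\Phi'(\bar u)$ together with the Tikhonov term to force a positive limit---is the standard way to prove such a statement and is presumably close to what the cited reference contains. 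So you have effectively reproduced the missing proof rather than taken a genuinely different route; the payoff is that your version is self-contained.
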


\begin{proof}
Inequality \eqref{E2.8} is proved in \cite[Theorem 2.3]{Casas2024}. Let us prove \eqref{E2.9}. From Assumption \ref{A2.2} we deduce the existence of $\rho_{\textsc{ssc}}\in(0,\rho_0)$ such that
\[
\|(\Phi'(u)-\Phi'(w))v\|_{L^2(X)}\leq \frac{\nu}{2} \|v\|_{L^2(X)}\ \forall w\in B_{\rho_{\textsc{ssc}}}^p(u)\text{ and }\forall v\in L^2(X),
\]
whence
\begin{align*}
|(J''(\bar u)-J''(u))v^2|\leq & \int_X \left|[(\Phi'(u)-\Phi'(w))v]v\right|\dmu \\
\leq &\|(\Phi'(u)-\Phi'(w))v\|_{L^2(X)} \|v\|_{L^2(X)} \leq
\frac{\nu}{2} \|v\|_{L^2(X)}^2.
\end{align*}
This inequality and \eqref{E2.8} imply \eqref{E2.9}.
\end{proof}

\section{An SQP method to solve \Pb}
\label{ZZ-S3}
\setcounter{equation}{0}
Let us describe the SQP method as Newton's method for a generalized equation.
We define $F:\mathcal{A}\longrightarrow L^p(X)$ by $F(u) = \Phi(u) +\tikhonov u$.
 For any control $u\in L^2(X)$, we introduce the normal cone of $\uad$ at $u$ as
\[N(u) = \left\{\begin{array}{ll}
\{w\in L^2(X):\ \int_X w(v-u)d\mu\leq 0\ \forall v\in\uad\}&\text{ if }u\in\uad,\\
\emptyset &\text{ if }u\not\in\uad.
\end{array}
\right.\]
The first-order optimality condition \eqref{E2.6} can be written as the generalized equation $0\in F(\bar u)+ N(\bar u)$. The generalized Newton method is formulated as follows: for a given $u_0\in \uad$ and $n\geq 0$, $u_{n+1}$ is a solution of
\begin{equation}\label{ZZ-E3.10}
0\in F(u_n) + F'(u_n) (u_{n+1}-u_n) + N(u_{n+1}).
\end{equation}
Notice that \eqref{ZZ-E3.10} is the first-order optimality condition of the constrained quadratic problem
\[
(Q_n)\qquad \min_{u\in\uad}\frac{1}{2}J''(u_n)(u-u_n)^2+J'(u_n)u.
\]

In principle, the algorithm as written above need not be well defined, because the problem $(Q_n)$ may have no solution or more than one solution. Our objective is to prove that there exists $\rho>0$ such that starting the iterations at a control $u_0\in B^p_\rho(\bar u)$, each problem $(Q_n)$ has a unique {\em local} solution in $B^p_\rho(\bar u)$, and that the sequence built in this way converges quadratically to $\bar u$ in $L^p(X)$ and in $L^\infty(X)$. To prove this, we perform the following steps. In Section \ref{S3.1a} we modify the iteration \eqref{ZZ-E3.10} by replacing $\uad$ by a new set $\widehat{\uad}$, which includes the constraint $u(x) = \bar u(x)$ if $x\in X_{\bar u}^\tau$. For this modified iteration we prove  the existence of $\rho > 0$ such that, under the assumption $u_n\in B^p_\rho(\bar u)$, the iteration has a unique solution and the algorithm converges quadratically in $L^p(X)$ and in $L^\infty(X)$. Later in Section \ref{ZZ-S3.2} we show that the elements of the sequence obtained with the modified method are local solutions of $(Q_n)$, so we can obtain them without previous knowledge of $X^\tau_{\bar u}$. We formulate our main result about the convergence of Algorithm \ref{Alg1} in Theorem \ref{ZZ-T3.9}.

\LinesNumbered
\begin{algorithm2e}
\caption{SQP method to solve \Pb.}\label{Alg1}
\DontPrintSemicolon
Initialize. Choose $u_0\in B^p_\rho(\bar u)$. Set $n=0$.\;
\For{$n\geq 0$}{
Find a local solution of the constrained quadratic problem
\begin{align*}
\mathrm{(}Q_n\mathrm{)}\quad & \min_{v\in\uad-\{u_n\}} \frac{1}{2}\int_\Omega \big(\kappa v +\Phi'(u_n)v\big) v\dx + \int_\Omega (\kappa u_n+ \Phi(u_n) \big)v\dx
\end{align*} \;
Name $v_n$ the obtained solution.
\;
Set $u_{n+1}=u_n+v_n$ and $n=n+1$.\;
}
\end{algorithm2e}

\subsection{The modified method}\label{S3.1a}

Following \cite{Troltz1999}, we consider the set
\[\widehat{\uad}= \{u\in\uad:\ u(x) = \bar u(x)\text{ if }x\in X_{\bar u}^\tau\}.\]
For any control $u\in L^2(X)$, we will denote $\widehat{N}(u)$ the normal cone of $\widehat{\uad}$ at $u$. The modified generalized Newton step is given by
\begin{equation}\label{ZZ-E3.11}
0\in F(u_n) + F'(u_n) (u_{n+1}-u_n) + \widehat{N}(u_{n+1}).
\end{equation}
\begin{lemma}\label{le::existuniqhatQn}
Let $\rho_{\textsc{ssc}}$ be as introduced in Lemma 2.7. For every $u_n\in B^p_{\rho_{\textsc{ssc}}}(\bar u)$, equation \eqref{ZZ-E3.11} has a unique solution $u_{n+1}\in\widehat{\uad}$.
\end{lemma}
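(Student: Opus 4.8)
The plan is to recognize \eqref{ZZ-E3.11} as the first-order optimality condition of a convex quadratic minimization over the closed convex set $\widehat{\uad}$, and then to obtain existence and uniqueness by the direct method together with a strong convexity estimate coming from Lemma \ref{L2.7}. To this end I would introduce the quadratic functional
\[
\mQ_n(u) = \frac12 J''(u_n)(u-u_n)^2 + \int_X F(u_n)(u-u_n)\dmu ,
\]
which is of class $C^\infty$ on $L^2(X)$: its Hessian is the bilinear form $J''(u_n)$, continuous on $L^2(X)\times L^2(X)$ by Assumption \ref{A2.2}, and $F(u_n)=\Phi(u_n)+\tikhonov u_n\in L^2(X)$. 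A direct computation gives $\mQ_n'(u)=F(u_n)+F'(u_n)(u-u_n)$, so $u_{n+1}\in\widehat{\uad}$ solves the variational inequality $\int_X \mQ_n'(u_{n+1})(u-u_{n+1})\dmu\ge 0$ for all $u\in\widehat{\uad}$ if and only if it solves \eqref{ZZ-E3.11}. Once the problem is shown to be convex on $\widehat{\uad}$, this variational inequality characterizes the global minimizers of $\mQ_n$, so it suffices to prove that $\mQ_n$ has a unique minimizer over $\widehat{\uad}$.

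The key observation is that any two elements of $\widehat{\uad}$ coincide with $\bar u$ on $X_{\bar u}^\tau$, so their difference lies in $E_{\bar u}^\tau$; equivalently $\widehat{\uad}\subset\bar u+E_{\bar u}^\tau$, and $\widehat{\uad}$ is a nonempty (it contains $\bar u$), closed and convex subset of this affine subspace. Writing $u=\bar u+w$ with $w\in E_{\bar u}^\tau$ and using that $\mQ_n$ is quadratic, I obtain the exact identity $\mQ_n(\bar u+w)=\mQ_n(\bar u)+\int_X\mQ_n'(\bar u)w\dmu+\frac12 J''(u_n)w^2$. Since $u_n\in B^p_{\rho_{\textsc{ssc}}}(\bar u)$ and $w\in E_{\bar u}^\tau$, estimate \eqref{E2.9} gives $\frac12 J''(u_n)w^2\ge\frac{\nu}{4}\|w\|_{L^2(X)}^2$, while the linear term is bounded by $\|\mQ_n'(\bar u)\|_{L^2(X)}\|w\|_{L^2(X)}$. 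Hence $\mQ_n$ is strongly convex on $\widehat{\uad}$ and coercive there with respect to $\|u-\bar u\|_{L^2(X)}$.

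Strong convexity yields uniqueness immediately. For existence I would use the direct method: a minimizing sequence is bounded in $L^2(X)$ by coercivity, a weakly convergent subsequence has its limit in $\widehat{\uad}$ (closed and convex, hence weakly closed), and $\mQ_n$ is weakly lower semicontinuous on $\bar u+E_{\bar u}^\tau$ because the cross term is weakly continuous (the operator $\Phi'(u_n)$ being compact on $L^2(X)$ by Assumption \ref{A2.2}) and the remaining part is convex and continuous; the limit is therefore the unique minimizer, equivalently the unique solution $u_{n+1}\in\widehat{\uad}$ of \eqref{ZZ-E3.11}. I expect the only real difficulty to be coercivity in the case $p=2$ with $\umin=-\infty$ or $\umax=+\infty$, where $\widehat{\uad}$ is unbounded and the full Hessian $J''(u_n)$ need not be positive, since $\Phi'(u_n)$ may produce negative curvature. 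This is precisely what the restriction to directions in $E_{\bar u}^\tau$ cures, as \eqref{E2.9} provides a uniform lower bound only there; it is the reason the modified feasible set $\widehat{\uad}$ is used in place of $\uad$.
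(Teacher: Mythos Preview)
Your proposal is correct and follows essentially the same approach as the paper: both recognize \eqref{ZZ-E3.11} as the first-order optimality condition of a quadratic program over $\widehat{\uad}$, exploit that $\widehat{\uad}-\{\bar u\}\subset E_{\bar u}^\tau$ so that \eqref{E2.9} yields strong convexity and coercivity, and conclude via the direct method. The only cosmetic difference is that the paper obtains weak lower semicontinuity directly from convexity plus continuity (and treats the bounded case separately), whereas you argue it via the compactness of $\Phi'(u_n)$; both routes are valid.
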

\begin{proof}
Equation \eqref{ZZ-E3.11} is the first-order optimality condition of the constrained quadratic problem
\[
(\widehat{Q}_n)\qquad \min_{u\in \widehat{\uad}}
\frac{1}{2}J''(u_n) (u-u_n)^2 + J'(u_n) u.
\]
It is sufficient to show that this problem is convex and has a unique solution. Let us denote $\mQ(u) = \frac12 J''(u_n) (u-u_n)^2 + J'(u_n) u$. We have that $\mQ''(u)v^2=J''(u_n)v^2$ for all $u\in\widehat{\uad}$ and all $v\in L^2(X)$. For all $u_a,u_b\in\widehat{\uad}$, $u_a-u_b\in E^\tau_{\bar u}$, so inequality \eqref{E2.9} implies that $\mQ''(u)(u_a-u_b)^2 \geq \frac{\nu}{2}\|u_a-u_b\|^2_{L^2(X)}$ and consequently $\mQ$ is strictly convex in $\widehat{\uad}$; see e.g. \cite[Th. 7.4-3]{Ciarlet1982}.
Since $\mathcal{Q}$ is convex and continuous, it is weakly lower semicontinuous. Noting that $\widehat{\uad}$ is convex and closed, we already have that $(\widehat{Q}_n)$ has a unique solution $u_{n+1}\in\widehat{\uad}$ if $\widehat{\uad}$ is bounded. In other case,
using \eqref{E2.3}, we can write
\begin{align*}
  \mQ(u) =&
  \frac{1}{2}J''(u_n)(u-\bar u+\bar u - u_n)^2+ J'(u_n)u  \\
   = &\frac{1}{2}J''(u_n)(u-\bar u)^2 + J''(u_n)(\bar u -u_n,u-\bar u) + \frac{1}{2}J''(u_n)(\bar u-u_n)^2+ J'(u_n)u
\end{align*}
On one hand, the term
$J''(u_n)(\bar u -u_n,u-\bar u) + \frac{1}{2}J''(u_n)(\bar u-u_n)^2+ J'(u_n)u$
is linear-affine in $u$. On the other hand, if $u\in\widehat{\uad}$ then $u-\bar u\in E^\tau_{\bar u}$, and \eqref{E2.9} implies that
\[\frac{1}{2}J''(u_n)(u-\bar u)^2\geq \frac{\nu}{2}\|u-\bar u\|_{L^2(X)}^2\]
and hence $\mQ$ is coercive. Therefore, $(\widehat{Q}_n)$ has a unique solution $u_{n+1}\in\widehat{\uad}$.
\end{proof}
We will prove the existence of $\rho > 0$ such that if $u_n \in B^p_\rho(\bar u)$ then the solution $u_{n+1}$ of \eqref{ZZ-E3.11} is also an element of the same ball. This implies that the modified Newton method is well defined, which is the first step towards the convergence. To do this, for every $u_n\in\uad$ and every $u\in L^2(X)$, we define
\begin{equation}\label{ZZ-E3.12}
  \delta_n(u) = F(\bar u)-F(u_n) + F'(\bar u)(u-\bar u)-F'(u_n)(u-u_n).
\end{equation}
A straightforward computation shows that \eqref{ZZ-E3.11} is equivalent to
\begin{equation}\label{ZZ-E3.13}
  \delta_n(u_{n+1})\in F(\bar u)+F'(\bar u)(u_{n+1}-\bar u) + \widehat{N}(u_{n+1}).
\end{equation}
Let us prove some auxiliary results about the following perturbed problem
\begin{equation}\label{ZZ-E3.14}
\delta\in F(\bar u)+F'(\bar u)(v-\bar u) + \widehat{N}(v).
\end{equation}

\begin{lemma}\label{ZZ-L3.2}
For any $\delta\in L^2(X)$, the problem \eqref{ZZ-E3.14}
has a unique solution $v_\delta\in\widehat{\uad}$. Moreover, $v_\delta$ satisfies
\begin{equation}\label{ZZ-E3.15}
  v_\delta(x) = \proj_{[\alpha,\beta]}\left[\frac{-1}{\kappa} \big(\Phi'(\bar u) (v_\delta-\bar u)+\Phi(\bar u)-\delta\big)(x)\right]\ \text{for a.a.}\,[\mu] \ x\in X\setminus X^\tau_{\bar u}.
\end{equation}
Further, if $\delta\in L^\infty(X)$, then $v_\delta\in L^\infty(X)$. Finally, if $\delta=0$ then $\bar u$ is the unique solution of \eqref{ZZ-E3.14}.
\end{lemma}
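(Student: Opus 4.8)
\emph{Proof plan.} The plan is to read \eqref{ZZ-E3.14} as the first-order optimality condition of a strictly convex quadratic problem over $\widehat{\uad}$ and then reuse the structure exploited in the proof of Lemma \ref{le::existuniqhatQn}. Concretely I would introduce
\[
\mQ_\delta(v) = \frac12 J''(\bar u)(v-\bar u)^2 + \int_X \big(F(\bar u)-\delta\big)v\dmu,
\]
and check by a direct computation, using \eqref{E2.3} and $F'(\bar u)(v-\bar u)=\Phi'(\bar u)(v-\bar u)+\tikhonov(v-\bar u)$, that $\mQ_\delta'(v)(w-v)=\int_X\big(F(\bar u)+F'(\bar u)(v-\bar u)-\delta\big)(w-v)\dmu$. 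Hence the variational inequality associated with $\min_{v\in\widehat{\uad}}\mQ_\delta(v)$ is precisely \eqref{ZZ-E3.14}. The functional is well defined on $\widehat{\uad}$: the linear term is finite because $F(\bar u)\in L^\infty(X)$, $\delta\in L^2(X)$ and $v\in L^2(X)$, while the quadratic term is finite since, by Assumption \ref{A2.2}, $J''(\bar u)$ extends to a continuous bilinear form on $L^2(X)\times L^2(X)$.

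For existence and uniqueness I would copy the argument of Lemma \ref{le::existuniqhatQn}. The key point is that any two elements $u_a,u_b\in\widehat{\uad}$ coincide with $\bar u$ on $X^\tau_{\bar u}$, so $u_a-u_b\in E_{\bar u}^\tau$ and \eqref{E2.8} gives $\mQ_\delta''(v)(u_a-u_b)^2=J''(\bar u)(u_a-u_b)^2\geq\nu\|u_a-u_b\|_{L^2(X)}^2$; thus $\mQ_\delta$ is strictly convex on $\widehat{\uad}$. If $\widehat{\uad}$ is bounded the direct method applies at once; otherwise, writing $v=(v-\bar u)+\bar u$ with $v-\bar u\in E_{\bar u}^\tau$, estimate \eqref{E2.8} shows $\tfrac12 J''(\bar u)(v-\bar u)^2\geq\tfrac\nu2\|v-\bar u\|_{L^2(X)}^2$, so $\mQ_\delta$ is coercive and again the direct method yields a unique minimizer $v_\delta\in\widehat{\uad}$.

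To obtain the pointwise formula \eqref{ZZ-E3.15} I would split the constraint defining $\widehat{\uad}$: on $X^\tau_{\bar u}$ the equality $v=\bar u$ is imposed, whereas on $X\setminus X^\tau_{\bar u}$ only the box $[\umin,\umax]$ is active, so the multiplier $\eta=\delta-F(\bar u)-F'(\bar u)(v_\delta-\bar u)\in\widehat{N}(v_\delta)$ decomposes pointwise. Rewriting $F(\bar u)+F'(\bar u)(v_\delta-\bar u)=\Phi(\bar u)+\Phi'(\bar u)(v_\delta-\bar u)+\tikhonov v_\delta$, the inclusion on $X\setminus X^\tau_{\bar u}$ reads $\delta-\Phi(\bar u)-\Phi'(\bar u)(v_\delta-\bar u)-\tikhonov v_\delta\in N_{[\umin,\umax]}(v_\delta)$ a.e., which is equivalent to \eqref{ZZ-E3.15} by the characterization $y=\proj_{[\umin,\umax]}(z)\iff z-y\in N_{[\umin,\umax]}(y)$ together with $\tikhonov>0$.

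Finally, for the regularity and the case $\delta=0$: on $X^\tau_{\bar u}$ we have $v_\delta=\bar u\in L^\infty(X)$, and on $X\setminus X^\tau_{\bar u}$ formula \eqref{ZZ-E3.15} writes $v_\delta$ as the projection of $-\tikhonov^{-1}(\Phi(\bar u)+\Phi'(\bar u)(v_\delta-\bar u)-\delta)$, where $\Phi(\bar u)\in L^\infty(X)$, $\Phi'(\bar u)(v_\delta-\bar u)\in L^\infty(X)$ by \eqref{E2.2} (since $v_\delta-\bar u\in L^p(X)$), and $\delta\in L^\infty(X)$ by hypothesis; the $1$-Lipschitz projection then keeps $v_\delta$ in $L^\infty(X)$. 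For $\delta=0$, since $\widehat{\uad}\subset\uad$, the optimality condition \eqref{E2.6} satisfied by $\bar u$ gives $\int_X F(\bar u)(w-\bar u)\dmu\geq0$ for all $w\in\widehat{\uad}$, i.e. $-F(\bar u)\in\widehat{N}(\bar u)$, which is exactly \eqref{ZZ-E3.14} with $v=\bar u$ and $\delta=0$; uniqueness then forces $v_0=\bar u$. I expect the only genuinely delicate point to be the $L^\infty$ bound when $p=2$ and the box is unbounded, where it is the smoothing property \eqref{E2.2} of $\Phi'(\bar u)$ from $L^2(X)$ into $L^\infty(X)$ that prevents the bootstrap from being circular.
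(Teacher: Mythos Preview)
Your proposal is correct and follows essentially the same route as the paper: you recognize \eqref{ZZ-E3.14} as the Euler inequality of the quadratic problem $(\widehat{Q}_\delta)$ (your $\mQ_\delta$ is exactly the paper's functional since $J'(\bar u)v=\int_X F(\bar u)v\dmu$), invoke \eqref{E2.8} on $E_{\bar u}^\tau$ for strict convexity and coercivity, derive \eqref{ZZ-E3.15} from the pointwise box constraint on $X\setminus X^\tau_{\bar u}$, and handle the $L^\infty$ regularity and the $\delta=0$ case exactly as the paper does (the paper phrases the latter via $N(\bar u)\subset\widehat{N}(\bar u)$, which is your argument in normal-cone language). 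Your closing remark about the case $p=2$ with an unbounded box is on point and matches the paper's use of Assumption~\ref{A2.1}/\eqref{E2.2}; there is no circularity because $v_\delta-\bar u\in L^2(X)=L^p(X)$ is already secured by existence.
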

\begin{proof}
  The generalized equation \eqref{ZZ-E3.14} is the first-order optimality condition of the constrained quadratic problem
\begin{equation*}
(\widehat{Q}_\delta)\qquad \min_{v\in \widehat{\uad}}
\frac{1}{2}J''(\bar u) (v-\bar u)^2 + J'(\bar u) v-\int_X\delta vd\mu.
\end{equation*}
By definition of $\widehat{\uad}$, we have that for all $v \in \widehat{\uad}$ the identity $v(x)-\bar u(x) =0$ holds for a.a. $[\mu]$ $x\in X^\tau_{\bar u}$, and hence $v-\bar u\in E^\tau_{\bar u}$. From the inequality \eqref{E2.8}, we infer that $(\widehat{Q}_\delta)$ is a strictly convex problem and $J''(\bar u) (v-\bar u)^2$ is coercive on $\widehat{\uad}$. Hence,  $(\widehat{Q}_\delta)$ has a unique solution, which is characterized by \eqref{ZZ-E3.14}.

The projection formula \eqref{ZZ-E3.15} follows from a standard  argument.
The $L^\infty(X)$ regularity in the case of bounded constraints is a consequence of \eqref{ZZ-E3.15}. Suppose one of the box constraints is missing. From the assumption \ref{A2.1} we deduce that $\Phi'(\bar u) (v_\delta-\bar u)+\Phi(\bar u)\in L^\infty(X)$ and hence the $L^\infty(X)$ regularity of $\delta$ yields the same regularity for $v_\delta$.

Since $\bar u \in \widehat{\uad}\subseteq \uad$, we know that $N(\bar u)\subseteq \widehat{N}(\bar u)$, so $0\in F(\bar u)+N(\bar u)\subseteq F(\bar u)+\widehat{N}(\bar u)$ and the last statement follows immediately.
\end{proof}

Next we address the Lipschitz stability of the solution of $\widehat{Q}_\delta$ with respect to small perturbations. First, we introduce some notation. For $q\geq p$, we will denote
\[
n_{p,q} = \left\{\begin{array}{ll}
 \mu(X)^{\frac{q-p}{qp}}& \text{ if }p\leq q<\infty,\\
 \mu(X)^{\frac{1}{p}}& \text{ if }p<q=\infty,\\
 1 & \text{ if }p = q= \infty.
\end{array}
\right.
\]
Notice that for $q\geq p$ and any $\rho >0$, $B^q_{\rho/n_{p,q}}(u)\subset B^p_{\rho}(u)$. Related with assumptions \ref{A2.3} and \ref{A2.4}, we define the following constants $\hat c_i$, that will appear in a natural way along the proof:
\[
\hat c_0 = \displaystyle\frac{n_{2,\infty}}{\nu} \text{ and } \hat c_i =\frac{1}{\tikhonov}\left(  \|\Phi'(\bar u)\|_{\mathcal{L}(L^{p_{i-1}}(\mu),L^{p_i}(X))} \hat c_{i-1}+n_{p_i,\infty}\right)\ i = 1, \ldots, N+1.
\]
\begin{theorem}\label{ZZ-T3.3}Given $\delta_a,\delta_b\in L^\infty(X)$,  let $v_a,v_b\in\widehat{\uad}$ be the solutions of $(\widehat{Q}_{\delta_a})$ and $(\widehat{Q}_{\delta_b})$, respectively. Then, there exists a constant $\hat c>0$ such that
\begin{equation}\label{ZZ-E3.16}\|v_a-v_b\|_{L^{q}(X)} \leq \hat c \|\delta_a-\delta_b\|_{L^\infty(X)}\ \text{ for all } q\in[1,\infty]
\end{equation}
\end{theorem}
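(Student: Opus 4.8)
The plan is to prove the estimate first in $L^2(X)$ by a coercivity argument, and then to bootstrap to higher integrability using the pointwise projection formula \eqref{ZZ-E3.15} together with the graded mapping properties of $\Phi'(\bar u)$ supplied by Assumption \ref{A2.3}.

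The crucial observation for the base case is that $v_a,v_b\in\widehat{\uad}$ both coincide with $\bar u$ on $X^\tau_{\bar u}$, so that $w:=v_a-v_b\in E^\tau_{\bar u}$. I would write the two variational inequalities characterizing $v_a$ and $v_b$, test the first with $v=v_b$ and the second with $v=v_a$, and add them; the affine terms containing $F(\bar u)$ cancel, leaving
\[
J''(\bar u)w^2=\int_X[F'(\bar u)w]w\dmu\leq \int_X(\delta_a-\delta_b)w\dmu.
\]
Because $w\in E^\tau_{\bar u}$, the no-gap coercivity \eqref{E2.8} gives $\nu\|w\|_{L^2(X)}^2\leq J''(\bar u)w^2$, while the right-hand side is bounded by $\|\delta_a-\delta_b\|_{L^2(X)}\|w\|_{L^2(X)}\leq n_{2,\infty}\|\delta_a-\delta_b\|_{L^\infty(X)}\|w\|_{L^2(X)}$. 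Dividing yields the base estimate $\|w\|_{L^2(X)}\leq \hat c_0\|\delta_a-\delta_b\|_{L^\infty(X)}$ with $\hat c_0=n_{2,\infty}/\nu$, which is exactly the $L^{p_0}$ bound since $p_0=2$.

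Next I would run an induction on $i=1,\dots,N+1$ to raise the exponent from $p_{i-1}$ to $p_i$. Both solutions satisfy \eqref{ZZ-E3.15} on $X\setminus X^\tau_{\bar u}$, and since $\proj_{[\alpha,\beta]}$ is nonexpansive, subtracting the two projection identities produces
\[
|w(x)|\leq \frac{1}{\kappa}\big|\Phi'(\bar u)w(x)-(\delta_a-\delta_b)(x)\big| \quad\text{for a.a. } x\in X\setminus X^\tau_{\bar u}.
\]
On $X^\tau_{\bar u}$ we have $w=0$, so this bound in fact holds a.e. on all of $X$, and restricting the right-hand side to a subset only lowers its $L^{p_i}$ norm. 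Taking $L^{p_i}(X)$ norms, using the continuity of $\Phi'(\bar u):L^{p_{i-1}}(X)\to L^{p_i}(X)$ and $\|\delta_a-\delta_b\|_{L^{p_i}(X)}\leq n_{p_i,\infty}\|\delta_a-\delta_b\|_{L^\infty(X)}$, the hypothesis $\|w\|_{L^{p_{i-1}}(X)}\leq \hat c_{i-1}\|\delta_a-\delta_b\|_{L^\infty(X)}$ propagates into $\|w\|_{L^{p_i}(X)}\leq \hat c_i\|\delta_a-\delta_b\|_{L^\infty(X)}$, matching the definition of $\hat c_i$ term by term. At the final step $i=N+1$ one uses $\Phi'(\bar u):L^{p}(X)\to L^\infty(X)$ from Assumption \ref{A2.1}, arriving at $\|w\|_{L^\infty(X)}\leq \hat c_{N+1}\|\delta_a-\delta_b\|_{L^\infty(X)}$.

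Finally, since $\mu(X)<\infty$, the $L^\infty$ bound dominates every intermediate exponent: for any $q\in[1,\infty]$ one has $\|w\|_{L^q(X)}\leq \mu(X)^{1/q}\|w\|_{L^\infty(X)}\leq \max\{1,\mu(X)\}\,\hat c_{N+1}\|\delta_a-\delta_b\|_{L^\infty(X)}$, so \eqref{ZZ-E3.16} holds with $\hat c=\max\{1,\mu(X)\}\hat c_{N+1}$. The only step requiring genuine care is the base case, which hinges on $w\in E^\tau_{\bar u}$ so that the \emph{uniform} coercivity constant $\nu$ of \eqref{E2.8} is available; this is precisely the reason the feasible set was replaced by $\widehat{\uad}$, and it is where the second-order sufficient condition and strict complementarity of Assumption \ref{A2.6} are used. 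The subsequent bootstrapping is then routine once one records that $w$ vanishes on $X^\tau_{\bar u}$, which upgrades the projection estimate to a global pointwise inequality.
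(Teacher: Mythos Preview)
Your proof is correct and follows essentially the same route as the paper: an $L^2$ estimate from coercivity on $E^\tau_{\bar u}$ (which the paper packages as Lemma~\ref{L2.16} in the appendix, but whose content is exactly the variational-inequality argument you wrote out), followed by the same bootstrapping through the chain $p_0,\dots,p_{N+1}$ via the nonexpansive projection formula and Assumption~\ref{A2.3}, ending with the identical constant $\hat c=\max\{1,\mu(X)\}\hat c_{N+1}$.
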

\begin{proof}Obviously, it is enough to prove it for $q=\infty$.
  We notice that for any $\delta\in L^2(X)$, with the change $w=v-\bar u$, problem $(\widehat{Q}_\delta)$ is equivalent to
\[(\widehat{Q}_\delta')\qquad \min_{w\in \widehat{\uad}-\{\bar u\}} \frac{1}{2}J''(\bar u) w^2+ J'(\bar u)w -\int_X\delta w d\mu
\]
Taking into account the second-order condition \eqref{E2.8},  the Lipschtz stability property in $L^2(X)$ proved in Lemma \ref{L2.16}, see the appendix, yields
\begin{equation}\label{ZZ-E3.17}
  \|v_a-v_b\|_{L^2(X)}\leq  \frac{1}{\nu}\|\delta_a-\delta_b\|_{L^2(X)} \le \hat{c}_0\|\delta_a-\delta_b\|_{L^\infty(X)}.
\end{equation}
Now we use a bootstrapping argument. Subtracting the respective projection formulas for $v_b$ and $v_a$, see \eqref{ZZ-E3.15}, and using the Lipschtiz property of the projection, we obtain
\[|v_a(x)-v_b(x)|\leq \frac{1}{\tikhonov}\big|\left(-\Phi'(\bar u)(v_a-v_b) + \delta_a-\delta_b\right)(x)\big| \text{ for a.a. }[\mu]\ x\in X\setminus X^\tau_{\bar u}.\]
Starting with \eqref{ZZ-E3.17} and using Assumption \ref{A2.3} we get for $j=1,\ldots,N+1$
\begin{align*}
 \| v_a&-v_b\|_{L^{p_j}(X)} =   \|v_a-v_b\|_{L^{p_j}(X\setminus X^\tau_{\bar u})} \\
  \leq &\frac{1}{\tikhonov}\| -\Phi'(\bar u)(v_a-v_b) + \delta_a-\delta_b \|_{L^{p_j}(X\setminus X^\tau_{\bar u})} \leq \frac{1}{\tikhonov}\| -\Phi'(\bar u)(v_a-v_b) + \delta_a-\delta_b \|_{L^{p_j}(X)} \\
  \leq & \frac{1}{\tikhonov}\|\Phi'(\bar u)\|_{\mathcal{L}(L^{p_{j-1}}(X),L^{p_j}(X))}  \|v_a-v_b\|_{L^{p_{j-1}}(X)} + \frac{n_{p_j,\infty}}{\tikhonov}\| \delta_a-\delta_b \|_{L^\infty(X)} \\
  \leq & \frac{1}{\tikhonov}\|\Phi'(\bar u)\|_{\mathcal{L}(L^{p_{j-1}}(X),L^{p_j}(X))} \hat c_{j-1} \| \delta_a-\delta_b \|_{L^\infty(X)} + \frac{n_{p_j,\infty}}{\tikhonov}\| \delta_a-\delta_b \|_{L^\infty(X)}\\
  \leq &  \frac{1}{\tikhonov}\big( \|\Phi'(\bar u)\|_{\mathcal{L}(L^{p_{j-1}}(X),L^{p_j}(X))} \hat c_{j-1}+ n_{p_j,\infty}\big)  \| \delta_a-\delta_b \|_{L^\infty(X)} =  \hat c_j\| \delta_a-\delta_b \|_{L^\infty(X)}.
\end{align*}
and the proof is complete taking $\hat c = \max\{1,\mu(X)\} \hat c_{N+1}$.
\end{proof}

We prove now some properties of the perturbations.
\begin{lemma}\label{ZZ-L3.4}
The following properties hold:
\begin{itemize}
\item[\textup{(i)}] For every $u_n\in \mA$ and every $u\in L^p(X)$, we have that $\delta_n(u)\in L^\infty(X)$.
\item[\textup{(ii)}] Let $\rho_0$ be the positive number introduced in Lemma \ref{L2.7}. Let $u_n$ be an element of $B^p_{\rho_0}(\bar u)$ and $u_a,u_b\in L^{q}(X)$ for 
    $q\in[p,\infty]$. Then, it fulfills
\[\|\delta_n(u_a)-\delta_n(u_b)\|_{L^\infty(X)} \leq n_{p,q} L_{\bar u,\Phi'} \|\bar u- u_n\|_{L^p(X)} \|u_a-u_b\|_{L^{q}(X)}.\]
\item[\textup{(iii)}] Suppose that $u_n,u\in B^p_\rho(\bar u)$ for some $\rho \in (0,\rho_0]$, then we have the estimate $\|\delta_n(u)\|_{L^\infty(X)} \leq (L_{\bar u,\Phi} + 3 M_{\bar u,\Phi'})\rho$.
\item[\textup{(iv)}]
Let 
$q\in[p,\infty]$ and suppose that  $u_n\in B^{q}_{\rho_0/n_{p,q}}(\bar u)$. Then, it holds:
\[\|\delta_n( \bar u)\|_{L^\infty(X)}\leq \frac{1}{2}
n_{p,q}^2 L_{\bar u,\Phi'}  \|\bar u-u_n\|_{L^{q}(X)}^2.\]
\end{itemize}
\end{lemma}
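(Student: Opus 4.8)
The plan is to handle the four items in increasing order of difficulty, all starting from the observation that the Tikhonov terms cancel in \eqref{ZZ-E3.12}: since $F'(u)v=\Phi'(u)v+\tikhonov v$, a direct computation gives
\[
\delta_n(u)=\Phi(\bar u)-\Phi(u_n)+\Phi'(\bar u)(u-\bar u)-\Phi'(u_n)(u-u_n),
\]
so that every summand maps into $L^\infty(X)$ by Assumption \ref{A2.1} together with the mapping property $\Phi'(\cdot):L^p(X)\to L^\infty(X)$. This already yields (i), once one recalls that $\bar u\in L^\infty(X)\subset L^p(X)$ and $u_n\in\mA\subset L^p(X)$, so every argument fed to $\Phi'$ indeed lies in $L^p(X)$.

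For (ii) I would subtract the two expressions and let the $u$-independent terms drop out, which leaves
\[
\delta_n(u_a)-\delta_n(u_b)=\big(\Phi'(\bar u)-\Phi'(u_n)\big)(u_a-u_b).
\]
Since $u_n\in B^p_{\rho_0}(\bar u)$, Assumption \ref{A2.4} (with base point $\bar u$ and radius $\rho_0$) bounds the $L^\infty(X)$ norm of this by $L_{\bar u,\Phi'}\|u_n-\bar u\|_{L^p(X)}\|u_a-u_b\|_{L^p(X)}$, and the factor $n_{p,q}$ then enters solely through the embedding $\|u_a-u_b\|_{L^p(X)}\le n_{p,q}\|u_a-u_b\|_{L^q(X)}$ encoded in the definition of $n_{p,q}$. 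Item (iii) is a triangle-inequality estimate of the three summands of $\delta_n(u)$: the first is controlled by \eqref{E2.1}, giving $L_{\bar u,\Phi}\rho$; the second by \eqref{E2.2}, giving $M_{\bar u,\Phi'}\rho$; and the third again by \eqref{E2.2} after writing $\|u-u_n\|_{L^p(X)}\le\|u-\bar u\|_{L^p(X)}+\|\bar u-u_n\|_{L^p(X)}\le 2\rho$, giving $2M_{\bar u,\Phi'}\rho$. Summing produces the announced constant $L_{\bar u,\Phi}+3M_{\bar u,\Phi'}$.

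The only delicate point is (iv). Here I would first record the simplification
\[
\delta_n(\bar u)=\Phi(\bar u)-\Phi(u_n)-\Phi'(u_n)(\bar u-u_n),
\]
which is exactly the first-order Taylor remainder of $\Phi$ at $u_n$ evaluated at $\bar u$, and rewrite it in integral form,
\[
\delta_n(\bar u)=\int_0^1\big[\Phi'(u_n+t(\bar u-u_n))-\Phi'(u_n)\big](\bar u-u_n)\dt.
\]
The hypothesis $u_n\in B^q_{\rho_0/n_{p,q}}(\bar u)$ gives $\|u_n-\bar u\|_{L^p(X)}\le n_{p,q}\|u_n-\bar u\|_{L^q(X)}\le\rho_0$, hence $u_n\in B^p_{\rho_0}(\bar u)$, and by convexity the whole segment $\{u_n+t(\bar u-u_n):t\in[0,1]\}$ remains in this ball, so the Lipschitz estimate of Assumption \ref{A2.4} is available along the path. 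The key bookkeeping is that the intermediate point $u_n+t(\bar u-u_n)$ sits at $L^p$-distance $t\|\bar u-u_n\|_{L^p(X)}$ from $u_n$, so the integrand is bounded by $t\,L_{\bar u,\Phi'}\|\bar u-u_n\|_{L^p(X)}^2$; integrating $\int_0^1 t\dt=\tfrac12$ yields the factor $\tfrac12$, and converting $\|\bar u-u_n\|_{L^p(X)}^2\le n_{p,q}^2\|\bar u-u_n\|_{L^q(X)}^2$ completes the bound.

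I expect the main obstacle to be the correct placement of the base point in the Lipschitz estimate of (iv). Applying Assumption \ref{A2.4} naively around $\bar u$ would route the difference $\Phi'(u_n+t(\bar u-u_n))-\Phi'(u_n)$ through $\Phi'(\bar u)$ and charge $\|u_n+t(\bar u-u_n)-\bar u\|_{L^p(X)}+\|u_n-\bar u\|_{L^p(X)}=(2-t)\|\bar u-u_n\|_{L^p(X)}$, which integrates to $\tfrac32$ rather than $\tfrac12$. The sharp constant therefore requires charging only the genuine distance $t\|\bar u-u_n\|_{L^p(X)}$ between the two points on the segment through $u_n$, i.e. using the Lipschitz inequality directly between these points (equivalently, that $L_{\bar u,\Phi'}$ acts as a Lipschitz modulus throughout $B^p_{\rho_0}(\bar u)$); this is the one place where the factor must be tracked with care.
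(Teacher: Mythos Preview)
Your proposal is correct and follows the paper's own proof essentially line by line: cancel the Tikhonov terms to reduce $\delta_n$ to the $\Phi$-part, then handle (i)--(iii) by the triangle inequality and \eqref{E2.1}--\eqref{E2.2}, and (iv) via the integral Taylor remainder combined with Assumption~\ref{A2.4} along the segment and the identity $\int_0^1 t\,dt=\tfrac12$. Your closing remark about needing $L_{\bar u,\Phi'}$ to serve as a Lipschitz modulus between any two points of the segment (not merely relative to the center $\bar u$) is exactly the implicit reading the paper adopts when it invokes Assumption~\ref{A2.4} in (iv).
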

\begin{proof}
\begin{itemize}
\item[\textup{(i)}]
From the definition of $\delta_n(u)$ and using Assumption \ref{A2.1} together with the fact that $\bar u\in L^\infty(X)$, we have that
  \begin{align*}
    \delta_n(u) = &\, \Phi(\bar u)+\tikhonov\bar u-  \Phi( u_n)-\tikhonov u_n\\
    & +\Phi'(\bar u) (u-\bar u)+\tikhonov (u-\bar u)-\Phi'(u_n)(u-u_n)-\tikhonov(u - u_n) \\
    = &\, \Phi(\bar u)-  \Phi( u_n)+\Phi'(\bar u) (u-\bar u)-\Phi'(u_n)(u-u_n)\in L^\infty(X).
  \end{align*}
\item[\textup{(ii)}]
From (i), we have that both $\delta_n(u_a)$ and $\delta_n(u_b)$ are elements of $L^\infty(X)$. Using the definition of $\delta_n(u)$ and Assumption \ref{A2.4}, we obtain

\begin{align*}
\|\delta_n(u_a)-\delta_n(u_b)\|_{L^\infty(X)} &=  \|(\Phi'(\bar u)-\Phi'( u_n))(u_a-u_b)\|_{L^\infty(X)} \\
&\leq L_{\bar u,\Phi'} \|\bar u- u_n\|_{L^p(X)} \|u_a-u_b\|_{L^p(X)}\\
&\leq n_{p,q}L_{\bar u,\Phi'} \|\bar u- u_n\|_{L^p(X)} \|u_a-u_b\|_{L^{q}(X)}.
\end{align*}
\item[\textup{(iii)}]
 Using \eqref{E2.1}, \eqref{E2.2}, and $u_n,u\in B^p_\rho(\bar u)$, we have
\begin{align*}
&\|\delta_n(u)\|_{L^\infty(X)}  \leq \|\Phi(\bar u)-  \Phi( u_n)\|_{L^\infty(X)} + \| \Phi'(\bar u) (u-\bar u)\|_{L^\infty(X)}\\
&+\|\Phi'(u_n)(u-u_n)\|_{L^\infty(X)} \leq L_{\bar u,\Phi}\|\bar u-u_n\|_{L^p(X)} +
       M_{\bar u,\Phi'} \|u-\bar u\|_{L^p(X)} \\
&+ M_{\bar u,\Phi'} \|u-\bar u\|_{L^p(X)} + M_{\bar u,\Phi'} \|u_n-\bar u\|_{L^p(X)}
       \leq  (L_{\bar u,\Phi} + 3 M_{\bar u,\Phi'})\rho,
  \end{align*}
\item[\textup{(iv)}]
We have that
\begin{align*}
\delta_{n}(\bar u) = & F(\bar u) - F(u_n) - F'(u_n)(\bar u - u_n)
\\
  = & \Phi(\bar u) +\tikhonov \bar u - \Phi(u_n) -\tikhonov u_n - \Phi'(u_n)(\bar u-u_n) -\tikhonov(\bar u-u_n) \\
  = & \int_0^1 (\Phi'(u_n + \theta(\bar u - u_n)) - \Phi'(u_n)) (\bar u - u_n)\mathrm{d}\theta.
\end{align*}
Since $B^{q}_{\rho_0/n_{p,q}}(\bar u)\subset B^p_{\rho_0}(\bar u)$, from Assumption \ref{A2.4} we readily deduce that
\begin{align}
\|(\Phi'(u_n + &\theta(\bar u - u_n))-\Phi'(u_n)) (\bar u-u_n)\|_{L^\infty(X)}\notag\\
\leq &
 L_{\bar u,\Phi'}  \|\theta(\bar u-u_n)\|_{L^{p}(X)} \|\bar u-u_n\|_{L^p(X)}
 \leq
 \theta n_{p,q}^2L_{\bar u,\Phi'}   \|\bar u-u_n\|_{L^{q}(X)}^2.\notag
\end{align}
The proof concludes by using the identity $\int_0^1\theta d\theta=1/2$.
\end{itemize}
\end{proof}

For $q\in[p,\infty]$ we set
\begin{equation}\label{ZZ-E3.18}
 \rho_{q}^\star = \min\{\frac{\rho_{\textsc{ssc}}}{n_{p,q}},\displaystyle \frac{1}{2 n^2_{p,q}\hat c L_{\bar u,\Phi'} }\}
\end{equation}
where $L_{\bar u,\Phi'}$, $\rho_{\textsc{ssc}}$   and $\hat c$ were introduced respectively in Assumption \ref{A2.4}, Lemma \ref{L2.7}, and Theorem \ref{ZZ-T3.3}.
\begin{lemma}\label{ZZ-L3.5}
Let 
$q\in[p,\infty]$ and let $\rho>0$ be such that $\rho \leq \rho_{q}^\star$.
Assume $u_0\in B^{q}_{\rho}(\bar u)$. Then, the sequence spanned by \eqref{ZZ-E3.11} is well defined, $u_n\in  B^{q}_{\rho}(\bar u)$ for all $n\geq 0$, $u_n\to\bar u$ in $L^{q}(X)$ and
\begin{equation}\label{ZZ-E3.19}
\|u_{n+1}-\bar u\|_{L^{q}(X)} \leq n_{p,q}^2 \hat c L_{\bar u,\Phi'} \|u_n-\bar u\|_{L^{q}(X)}^2\ \forall n\geq 0.
\end{equation}
\end{lemma}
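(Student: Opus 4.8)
The plan is to argue by induction on $n$, establishing simultaneously that the iteration is well defined, that $u_n\in B^q_\rho(\bar u)$, and that the quadratic estimate \eqref{ZZ-E3.19} holds. The base case $u_0\in B^q_\rho(\bar u)$ is the hypothesis. For the inductive step, assume $u_n\in B^q_\rho(\bar u)$. Since $\rho\le\rho^\star_q\le\rho_{\textsc{ssc}}/n_{p,q}$, the inclusion $B^q_{\rho_{\textsc{ssc}}/n_{p,q}}(\bar u)\subset B^p_{\rho_{\textsc{ssc}}}(\bar u)$ gives $u_n\in B^p_{\rho_{\textsc{ssc}}}(\bar u)$, so Lemma \ref{le::existuniqhatQn} provides a unique $u_{n+1}\in\widehat{\uad}$ solving \eqref{ZZ-E3.11}: this is what makes the step well defined. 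Note also that $\rho_{\textsc{ssc}}<\rho_0$, so $u_n\in B^p_{\rho_0}(\bar u)$ and $u_n\in B^q_{\rho_0/n_{p,q}}(\bar u)$, as will be needed below.

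Next I would exploit the reformulation \eqref{ZZ-E3.13}, which says precisely that $u_{n+1}$ solves the perturbed problem \eqref{ZZ-E3.14} with datum $\delta=\delta_n(u_{n+1})$, while $\bar u$ is its solution with $\delta=0$ by the last statement of Lemma \ref{ZZ-L3.2}. By Lemma \ref{ZZ-L3.4}(i), $\delta_n(u_{n+1})\in L^\infty(X)$, so the Lipschitz stability of Theorem \ref{ZZ-T3.3} applies and yields $\|u_{n+1}-\bar u\|_{L^q(X)}\le\hat c\,\|\delta_n(u_{n+1})\|_{L^\infty(X)}$. This reduces everything to estimating the perturbation evaluated at the still-unknown point $u_{n+1}$.

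The heart of the argument is to bound $\|\delta_n(u_{n+1})\|_{L^\infty(X)}$. Since $\delta_n$ is affine in its argument (its $u$-dependence is $(\Phi'(\bar u)-\Phi'(u_n))u$), I split $\delta_n(u_{n+1})=\delta_n(\bar u)+\bigl(\delta_n(u_{n+1})-\delta_n(\bar u)\bigr)$ and estimate each piece separately: Lemma \ref{ZZ-L3.4}(iv) controls the first term by $\tfrac12 n_{p,q}^2 L_{\bar u,\Phi'}\|u_n-\bar u\|_{L^q(X)}^2$, and Lemma \ref{ZZ-L3.4}(ii) controls the second by $n_{p,q}L_{\bar u,\Phi'}\|\bar u-u_n\|_{L^p(X)}\,\|u_{n+1}-\bar u\|_{L^q(X)}$. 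Using $\|\bar u-u_n\|_{L^p(X)}\le n_{p,q}\|\bar u-u_n\|_{L^q(X)}\le n_{p,q}\rho$, the coefficient of $\|u_{n+1}-\bar u\|_{L^q(X)}$ on the right-hand side becomes $\hat c\,n_{p,q}^2L_{\bar u,\Phi'}\rho$, which is $\le\tfrac12$ by the very definition \eqref{ZZ-E3.18} of $\rho^\star_q$. This self-referential (implicit) term is the main obstacle of the proof; the constant $\rho^\star_q$ is engineered precisely so that it can be absorbed into the left-hand side.

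Performing that absorption leaves $\|u_{n+1}-\bar u\|_{L^q(X)}\le n_{p,q}^2\hat c\,L_{\bar u,\Phi'}\|u_n-\bar u\|_{L^q(X)}^2$, which is exactly \eqref{ZZ-E3.19}. Finally, since $\|u_n-\bar u\|_{L^q(X)}\le\rho\le\rho^\star_q\le(2n_{p,q}^2\hat c\,L_{\bar u,\Phi'})^{-1}$, this estimate gives $\|u_{n+1}-\bar u\|_{L^q(X)}\le\tfrac12\|u_n-\bar u\|_{L^q(X)}$; hence $u_{n+1}\in B^q_\rho(\bar u)$, which closes the induction, and the norms decay geometrically, so $u_n\to\bar u$ in $L^q(X)$.
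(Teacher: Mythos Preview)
Your proof is correct and follows essentially the same route as the paper: induction on $n$, well-definedness via Lemma~\ref{le::existuniqhatQn}, identification of $u_{n+1}$ and $\bar u$ as solutions of $(\widehat{Q}_\delta)$ for $\delta=\delta_n(u_{n+1})$ and $\delta=0$, Lipschitz stability from Theorem~\ref{ZZ-T3.3}, and the estimates of Lemma~\ref{ZZ-L3.4}(ii),(iv) to absorb the implicit term using $\rho\le\rho_q^\star$. The only cosmetic difference is that the paper introduces an auxiliary point $v$ solving $(\widehat{Q}_{\delta_n(\bar u)})$ and applies Theorem~\ref{ZZ-T3.3} twice (to $\|u_{n+1}-v\|$ and $\|v-\bar u\|$) before using the triangle inequality, whereas you apply Theorem~\ref{ZZ-T3.3} once and then split $\delta_n(u_{n+1})$; the resulting inequalities are identical.
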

\begin{proof}
We prove by induction that $u_n\in  B^{q}_{\rho}(\bar u)$ for all $n\geq 0$. By assumption, the statement is true for $n=0$. Suppose it is true for $n$ and let us prove it for $n+1$.

By Lemma \ref{le::existuniqhatQn}, and noting that $B^{q}_{\rho}(\bar u)\subset B^p_{\rho_{\textsc{ssc}}}(\bar u)$,  \eqref{ZZ-E3.11} has a unique solution $u_{n+1}\in\widehat{\uad}$. Furthermore, by \eqref{ZZ-E3.13}, we now that  $u_{n+1}$ solves $(\widehat{Q}_\delta)$ for $\delta = \delta_n(u_{n+1})$.
Let $v\in L^\infty(X)$ be the unique solution of $(\widehat{Q}_\delta)$ for $\delta = \delta_n(\bar u)$. From Theorem \ref{ZZ-T3.3}, Lemma \ref{ZZ-L3.4}(ii), and the condition on $\rho$ we have that
\begin{align*}
  \|u_{n+1}-v\|_{L^{q}(X)}\leq & \hat c\|\delta_{n}(u_{n+1})-\delta_n(\bar u)\|_{L^\infty(X)} \\
  \leq &n_{p,q} \hat c  L_{\bar u,\Phi'} \|\bar u-u_n\|_{L^p(X)} \|u_{n+1}-\bar u\|_{L^{q}(X)} \\
  \leq &n_{p,q}^2 \hat c  L_{\bar u,\Phi'} \rho \|u_{n+1}-\bar u\|_{L^{q}(X)} \leq \frac{1}{2}\|u_{n+1}-\bar u\|_{L^{q}(X)},
\end{align*}
where we have used the induction hypothesis to get $\|\bar u-u_n\|_{L^{p}(X)}\leq n_{p,q}\|\bar u-u_n\|_{L^{q}(X)}\leq n_{p,q} \rho$.
Therefore
\begin{align*}
\|u_{n+1}-\bar u\|_{L^{q}(X)}\leq &\|u_{n+1}-v\|_{L^{q}(X)} +\|v-\bar u\|_{L^{q}(X)}\\
\leq & \frac{1}{2}\|u_{n+1}-\bar u\|_{L^{q}(X)} +\|v-\bar u\|_{L^{q}(X)}.
\end{align*}
This inequality, Theorem \ref{ZZ-T3.3}, the fact that $\bar u$ is the solution of $(\widehat{Q}_0)$, and the estimate in Lemma \ref{ZZ-L3.4}(iv) yield
\begin{align}
\|u_{n+1}-\bar u\|_{L^{q}(X)} \leq & 2\|v-\bar u\|_{L^{q}(X)}
\leq  2 \hat c\|\delta_n(\bar u)\|_{L^\infty(X)} \notag\\
\leq & n_{p,q}^2 \hat c L_{\bar u,\Phi'} \|\bar u-u_n\|_{L^{q}(X)}^2.\label{ZZ-E3.20}
  \end{align}
By induction hypotheses, $\|\bar u-u_n\|_{L^{q}(X)}\leq \rho$. Using this fact, \eqref{ZZ-E3.20} and the condition imposed on $\rho$ we obtain
  \begin{align}\label{ZZ-E3.21}
  \|u_{n+1}-\bar u\|_{L^{q}(X)} \leq & n_{p,q}^2\hat c L_{\bar u,\Phi'} \rho^2\leq \frac{1}{2}\rho.
  \end{align}
So, $u_{n+1}\in B^{q}_{\rho}(\bar u)$ and the sequence is well defined. Estimate \eqref{ZZ-E3.19} was proved in \eqref{ZZ-E3.20}. This estimate leads to
  \[\|u_{n+1}-\bar u\|_{L^{q}(X)}\leq \frac{1}{2^{2^n}}\frac{1}{n_{p,q}^2\hat c L_{\bar u,\Phi'}}, \]
  and hence $u_n\to\bar u$ in $L^{q}(X)$.
\end{proof}
So far, we have quadratic convergence in $L^{q}(X)$ provided $u_0\in B^{q}_{\rho_q^\star}(\bar u)$ for 
$q\in[p,\infty]$. We show now that for any $q = (p,\infty]$ it is enough that $u_0\in B^p_\rho(\bar u)$ for an appropriate $\rho$.

\begin{corollary}\label{ZZ-C3.6}Consider $q\in (p,\infty]$ and suppose $u_0\in B^p_\rho(\bar{u})$ for some $\rho \in (0,\rho_p^\star]$ satisfying
\begin{equation}\label{ZZ-E3.22} \hat c (L_{\bar u,\Phi} + 3 M_{\bar u,\Phi'})\rho \leq \rho_{q}^\star \end{equation}
Then, the sequence spanned by \eqref{ZZ-E3.11} is well defined, converges to $\bar u$ in $L^\infty(X)$, and
\[
\|u_{n+1}-\bar u\|_{L^{q}(X)} \leq n_{p,q}^2 \hat c L_{\bar u,\Phi'} \|u_n-\bar u\|_{L^{q}(X)}^2\ \forall n\geq 1.
\]
\end{corollary}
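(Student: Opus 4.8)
The plan is to bootstrap from the $L^p$ theory already in hand (Lemma \ref{ZZ-L3.5}) by showing that a single Newton step turns an $L^p$-small error into an $L^q$-small one. Since $\rho\le\rho_p^\star$, Lemma \ref{ZZ-L3.5} applied with exponent $p$ shows at once that the sequence generated by \eqref{ZZ-E3.11} is well defined and that $u_n\in B^p_\rho(\bar u)$ for every $n\ge 0$. The essential content of the corollary is that, although $u_0$ is controlled only in $L^p$, the iterate $u_1$ is already controlled in $L^q$, after which the $L^q$ version of Lemma \ref{ZZ-L3.5} can take over.

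To effect this gain of integrability I would compare the problem solved by $u_1$ with the one solved by $\bar u$. By \eqref{ZZ-E3.13} with $n=0$, $u_1$ solves $(\widehat{Q}_\delta)$ for $\delta=\delta_0(u_1)$, while $\bar u$ solves $(\widehat{Q}_0)$ by the last assertion of Lemma \ref{ZZ-L3.2}. Lemma \ref{ZZ-L3.4}(i) gives $\delta_0(u_1)\in L^\infty(X)$, so Theorem \ref{ZZ-T3.3} applies and yields, for the given $q$,
\[
\|u_1-\bar u\|_{L^{q}(X)}\le \hat c\,\|\delta_0(u_1)\|_{L^\infty(X)}.
\]
Since $u_0,u_1\in B^p_\rho(\bar u)$, Lemma \ref{ZZ-L3.4}(iii) bounds the right-hand side by $\hat c\,(L_{\bar u,\Phi}+3M_{\bar u,\Phi'})\rho$, and \eqref{ZZ-E3.22} then forces $\|u_1-\bar u\|_{L^{q}(X)}\le\rho_{q}^\star$, i.e.\ $u_1\in B^{q}_{\rho_{q}^\star}(\bar u)$. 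I would now restart Lemma \ref{ZZ-L3.5}, this time with exponent $q$ and initial iterate $u_1$; by the uniqueness in Lemma \ref{le::existuniqhatQn} (valid because $B^{q}_{\rho_{q}^\star}(\bar u)\subset B^p_{\rho_{\textsc{ssc}}}(\bar u)$) the new sequence is exactly the tail $(u_n)_{n\ge 1}$. This produces $u_n\in B^{q}_{\rho_{q}^\star}(\bar u)$ for all $n\ge 1$, convergence in $L^{q}(X)$, and the quadratic estimate \eqref{ZZ-E3.19} with indices shifted by one, which is precisely the claimed bound for $n\ge 1$.

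It remains to upgrade convergence to $L^\infty(X)$ for every $q\in(p,\infty]$. For this I would invoke Theorem \ref{ZZ-T3.3} once more at the $L^\infty$ exponent: since $u_{n+1}$ solves $(\widehat{Q}_{\delta_n(u_{n+1})})$ and $\bar u$ solves $(\widehat{Q}_0)$,
\[
\|u_{n+1}-\bar u\|_{L^\infty(X)}\le \hat c\,\|\delta_n(u_{n+1})\|_{L^\infty(X)}
\le \hat c\big(\|\delta_n(u_{n+1})-\delta_n(\bar u)\|_{L^\infty(X)}+\|\delta_n(\bar u)\|_{L^\infty(X)}\big),
\]
and the two terms are controlled by Lemma \ref{ZZ-L3.4}(ii) and (iv) in terms of $\|\bar u-u_n\|_{L^{q}(X)}$ (part (iv) applies since $u_n\in B^{q}_{\rho_{q}^\star}(\bar u)\subset B^{q}_{\rho_0/n_{p,q}}(\bar u)$). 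As $u_n\to\bar u$ in $L^{q}(X)$, the right-hand side tends to $0$, whence $u_n\to\bar u$ in $L^\infty(X)$.

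The main obstacle is the gain-of-integrability step. The delicate point is that we assume only $u_0\in B^p_\rho(\bar u)$, yet must place $u_1$ in an $L^q$ ball with $q>p$; this succeeds precisely because the Newton residual $\delta_0(u_1)$ is measured in $L^\infty(X)$ — a consequence of the $L^\infty$ bounds \eqref{E2.1}–\eqref{E2.2} on $\Phi$ and $\Phi'$ that underlie Lemma \ref{ZZ-L3.4}(iii) — while Theorem \ref{ZZ-T3.3} converts an $L^\infty$ perturbation into an $L^q$ (indeed $L^\infty$) estimate with a single constant $\hat c$ independent of $q$. One must also watch the index bookkeeping: the improved ball membership and estimate hold only from $n=1$ onward, which is exactly why the quadratic bound is stated for $n\ge 1$.
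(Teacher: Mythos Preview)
Your argument is correct and follows the paper's approach exactly: bound $\|u_1-\bar u\|_{L^q(X)}$ via Theorem \ref{ZZ-T3.3} and Lemma \ref{ZZ-L3.4}(iii), then restart Lemma \ref{ZZ-L3.5} at exponent $q$ from the shifted sequence $(u_n)_{n\ge 1}$. Your extra paragraph deriving $L^\infty$ convergence from the $L^q$ convergence via Theorem \ref{ZZ-T3.3} and Lemma \ref{ZZ-L3.4}(ii),(iv) is a welcome addition---the paper's own proof stops at ``quadratic convergence in $L^q(X)$ for the shifted sequence'' and does not spell out the $L^\infty$ claim in the statement, so you have in fact been more careful here.
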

\begin{proof}
Using again that  $u_{1}$ solves $(\widehat{Q}_\delta)$ for $\delta = \delta_0(u_{1})$ and $\bar u$ is the solution of $(\widehat{Q}_0)$, from Theorem \ref{ZZ-T3.3}, and Lemma \ref{ZZ-L3.4}(iii), we obtain that
  \[\|u_{1}-\bar u\|_{L^{q}(X)}\leq \hat c \|\delta_0(u_{1})\|_{L^\infty(X)} \leq \hat c (L_{\bar u,\Phi} + 3 M_{\bar u,\Phi'})\rho.\]
This means that $u_1\in B^{q}_{\rho_{q}}(\bar u)$ for $\rho_{q} = \hat c (L_{\bar u,\Phi} + 3 M_{\bar u,\Phi'})\rho$. Condition \eqref{ZZ-E3.22} implies that $\rho_{q}\leq \rho_{q}^\star$, and we can apply Lemma \ref{ZZ-L3.5} for quadratic convergence in $L^{q}(X)$ for the shifted sequence $u_n' = u_{n+1}$ for $n\geq 0$.
\end{proof}
\subsection{Back to the original method}\label{ZZ-S3.2}
Now we study the relation of the solution of \eqref{ZZ-E3.11} with solutions of \eqref{ZZ-E3.10}.
\begin{lemma}\label{ZZ-L3.7}
Let $v\in\widehat{\uad}$ be the unique solution of problem 
$(\widehat{Q}_\delta)$
 for some $\delta$ such that $(1+n_{p,\infty}\hat c M_{\bar u,\Phi'})\|\delta\|_{L^\infty(X)}\leq \tau$. Then $v$ solves
\[\delta\in F(\bar u)+F'(\bar u)(v-\bar u) + N(v).\]
\end{lemma}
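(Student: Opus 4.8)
The plan is to exploit the only structural difference between $\widehat{N}(v)$ and $N(v)$. Since $\widehat{\uad}\subseteq\uad$ differ solely through the pinning constraint $u=\bar u$ on $X_{\bar u}^\tau$, the two normal cones admit the same pointwise description on $X\setminus X_{\bar u}^\tau$, while on $X_{\bar u}^\tau$ the cone $\widehat{N}(v)$ imposes no sign condition and $N(v)$ does. Writing $w:=\delta-F(\bar u)-F'(\bar u)(v-\bar u)$, the hypothesis is exactly $w\in\widehat{N}(v)$ and the goal is the nontrivial inclusion $w\in N(v)$. First I would record the pointwise characterizations: for $\tilde v\in\widehat{\uad}$ one has $\tilde v-v=0$ on $X_{\bar u}^\tau$ (because $v$ and $\tilde v$ both equal $\bar u$ there), so the defining inequality of $\widehat{N}(v)$ only tests directions supported on $X\setminus X_{\bar u}^\tau$. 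Hence $w\in\widehat{N}(v)$ already yields the box-type conditions $w\le0$ where $v=\umin$ and $w\ge0$ where $v=\umax$ on $X\setminus X_{\bar u}^\tau$, which are precisely the conditions required for $N(v)$ there. Consequently, only the two sets $X_{\bar u}^{\taup}$ and $X_{\bar u}^{\taum}$ remain to be checked.

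On $X_{\bar u}^\tau$ we have $v=\bar u$, so the term $\tikhonov(v-\bar u)$ inside $F'(\bar u)(v-\bar u)$ vanishes pointwise and $w=\delta-\Phi'(\bar u)(v-\bar u)-\big(\Phi(\bar u)+\tikhonov\bar u\big)$ a.e. on $X_{\bar u}^\tau$. Recall that $\Phi(\bar u)+\tikhonov\bar u>\tau$ on $X_{\bar u}^{\taup}$ (where $\bar u=\umin$) and $\Phi(\bar u)+\tikhonov\bar u<-\tau$ on $X_{\bar u}^{\taum}$ (where $\bar u=\umax$). Thus, once the uniform bound $\|\delta-\Phi'(\bar u)(v-\bar u)\|_{L^\infty(X)}\le\tau$ is available, on $X_{\bar u}^{\taup}$ the dominating term $-(\Phi(\bar u)+\tikhonov\bar u)<-\tau$ forces $w<0$, matching the requirement $w\le0$ at points where $v=\umin$; symmetrically $w>0$ on $X_{\bar u}^{\taum}$, matching $w\ge0$ at points where $v=\umax$. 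This finishes the verification of the $N(v)$ conditions and hence the proof.

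It remains to establish that $L^\infty$ bound, and here the main (and essentially only) subtlety appears: although $v-\bar u$ vanishes on $X_{\bar u}^\tau$, the function $\Phi'(\bar u)(v-\bar u)$ need not vanish there because $\Phi'(\bar u)$ is a nonlocal operator, so the term cannot simply be discarded and must instead be controlled in norm. I would bound it by \eqref{E2.2}, namely $\|\Phi'(\bar u)(v-\bar u)\|_{L^\infty(X)}\le M_{\bar u,\Phi'}\|v-\bar u\|_{L^p(X)}$, pass to $L^\infty$ through the finite-measure embedding $\|v-\bar u\|_{L^p(X)}\le n_{p,\infty}\|v-\bar u\|_{L^\infty(X)}$, and finally invoke the stability estimate of Theorem \ref{ZZ-T3.3} comparing $v$ (the solution for the datum $\delta$) with $\bar u$ (the solution for the datum $0$, by the last statement of Lemma \ref{ZZ-L3.2}) to obtain $\|v-\bar u\|_{L^\infty(X)}\le\hat c\,\|\delta\|_{L^\infty(X)}$. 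Combining these estimates gives $\|\delta-\Phi'(\bar u)(v-\bar u)\|_{L^\infty(X)}\le(1+n_{p,\infty}\hat c\,M_{\bar u,\Phi'})\|\delta\|_{L^\infty(X)}$, which is $\le\tau$ exactly by the hypothesis on $\delta$. The whole argument is driven by the strict-complementarity margin $\tau$ furnished by Assumption \ref{A2.6}, which supplies precisely the room needed to absorb the perturbation.
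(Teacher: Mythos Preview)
Your proof is correct and follows essentially the same approach as the paper: both split the analysis into $X\setminus X_{\bar u}^\tau$ (where the modified and unmodified constraints coincide) and $X_{\bar u}^{\tau\pm}$ (where the strict margin $\tau$ absorbs the perturbation), and both rely on the identical key estimate $\|\delta-\Phi'(\bar u)(v-\bar u)\|_{L^\infty(X)}\le(1+n_{p,\infty}\hat c\,M_{\bar u,\Phi'})\|\delta\|_{L^\infty(X)}\le\tau$ obtained via \eqref{E2.2} and Theorem~\ref{ZZ-T3.3}. The only cosmetic difference is that the paper works directly with the variational inequality, introducing an auxiliary $\hat u\in\widehat{\uad}$ to split the integral, whereas you invoke the equivalent pointwise characterization of the normal cone to box constraints.
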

\begin{proof}
  Consider $u\in\uad$ and define
  \[\hat u = \left\{\begin{array}{cc}
                      \bar u & \text{ in }X^\tau_{\bar u} \\
                      u & \text{ in }X\setminus X^\tau_{\bar u}
                    \end{array}
  \right.
  \]
  We have that
  \begin{align*}
    \int_X & (\Phi'(\bar u)(v-\bar u) + \tikhonov v +\Phi(\bar u) -\delta)(u-v)d\mu  \\
    = & \int_{X^\tau_{\bar u}}  (\Phi'(\bar u)(v-\bar u) + \tikhonov v +\Phi(\bar u) -\delta)(u-v)d\mu
    \\
    &
     + \int_{X}  (\Phi'(\bar u)(v-\bar u) + \tikhonov v +\Phi(\bar u) -\delta)(\hat u-v)d\mu.
  \end{align*}
  The last integral is nonnegative since $v$ solves \eqref{ZZ-E3.14}.
For the second integral, we first notice that, using that $\bar u$ is the unique solution of $(\widehat{Q}_0)$ and Theorem \ref{ZZ-T3.3} with $i = N+1$, we have that $|v(x)-\bar u(x)|\leq \hat c \|\delta\|_{L^\infty(X)}$ for a.a.$[\mu]$ $x\in X$. From \eqref{E2.2} we infer that
  \[\|\Phi'(\bar u)(v-u)\|_{L^\infty(X)} \leq n_{p,\infty}M_{\bar u,\Phi'} \|v-\bar u\|_{L^\infty(X)} \leq n_{p,\infty} \hat c M_{\bar u,\Phi'} \|\delta\|_{L^\infty(X)}.\]
 This yields
  \begin{equation}\label{ZZ-E3.23}|(\Phi'(\bar u)(v-u)-\delta)(x)|\leq (1+n_{p,\infty}\hat c M_{\bar u,\Phi'})\|\delta\|_{L^\infty(X)}\leq \tau\ \text{ a.e.}[\mu] \text{ in } X.
  \end{equation}
  Using that $v=\bar u$ in $X^\tau_{\bar u}$ and the definition of $X^{\taup}_{\bar u}$, we have that
  \begin{equation}\label{ZZ-E3.24}\bar\Phi(\bar u)(x) + \tikhonov v(x) = \bar\Phi(\bar u)(x) + \tikhonov \bar u(x) >\tau\text{ for a.e.}[\mu]\ x\in X^{\taup}_{\bar u}.
  \end{equation}
  In a similar way we have that
  \begin{equation}\label{ZZ-E3.25}\bar\Phi(\bar u)(x) + \tikhonov v(x) = \bar\Phi(\bar u)(x) + \tikhonov \bar u(x) <-\tau\text{ for a.e.}[\mu]\ x\in X^{\taum}_{\bar u}.
  \end{equation}
Then we can write
  \begin{align*}
   \int_{X^\tau_{\bar u}} & (\Phi'(\bar u)(v-\bar u) +\tikhonov v +\Phi(\bar u) -\delta)(u-v)d\mu \\
    =  &  \int_{X^{\taup}_{\bar u}}(\Phi'(\bar u)(v-\bar u)+ \tikhonov v+\Phi(\bar u)-\delta)(u-\alpha)d\mu \\
    & + \int_{X^{\taum}_{\bar u}}(\Phi'(\bar u)(v-\bar u) +\tikhonov v+\Phi(\bar u)-\delta)(u-\beta)d\mu.
  \end{align*}
  From \eqref{ZZ-E3.23} and \eqref{ZZ-E3.24}, and noting that $u\geq \alpha$, we have that the integrand in the first integral is the product of two nonnegative numbers.  On the other hand, \eqref{ZZ-E3.23} and \eqref{ZZ-E3.25}, together with the fact that $u\leq \beta$ imply that the integrand in the second integral is the product of two non positive numbers, so the integral is nonnegative and the proof is complete.
\end{proof}
\begin{lemma}\label{ZZ-L3.8}Suppose $u_n\in  B^p_\rho(\bar u)$ for some $\rho\in(0,\rho_p^\star]$ satisfying
\begin{equation}\label{ZZ-E3.26}(1+n_{p,\infty}\hat c M_{\bar u,\Phi'})(L_{\bar u,\Phi} + 3 M_{\bar u,\Phi'})\rho \leq\tau,\end{equation}
and let $u_{n+1}\in\widehat{\uad}$ be the unique solution of \eqref{ZZ-E3.11}. Then, $u_{n+1}$ solves \eqref{ZZ-E3.10}, it is the unique solution of \eqref{ZZ-E3.10} in $B^p_\rho(\bar u)\cap\uad$, and it is the unique local solution of $(Q_n)$ in $B^p_\rho(\bar u)$.
\end{lemma}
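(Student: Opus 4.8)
The plan is to reduce every statement about the normal cone $N$ to the already-settled situation for $\widehat N$, exploiting the inclusion $N(v)\subseteq\widehat N(v)$ for $v\in\widehat\uad$ and the uniqueness granted by Lemmas \ref{le::existuniqhatQn} and \ref{ZZ-L3.2}. For the first assertion I would invoke Lemma \ref{ZZ-L3.7} with $\delta=\delta_n(u_{n+1})$. The inductive step of Lemma \ref{ZZ-L3.5} applied with $q=p$ shows that $u_n\in B^p_\rho(\bar u)$ and $\rho\le\rho_p^\star$ force $u_{n+1}\in B^p_\rho(\bar u)$, so Lemma \ref{ZZ-L3.4}(iii) gives $\|\delta_n(u_{n+1})\|_{L^\infty(X)}\le(L_{\bar u,\Phi}+3M_{\bar u,\Phi'})\rho$. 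Together with \eqref{ZZ-E3.26} this is exactly the smallness hypothesis $(1+n_{p,\infty}\hat cM_{\bar u,\Phi'})\|\delta_n(u_{n+1})\|_{L^\infty(X)}\le\tau$ of Lemma \ref{ZZ-L3.7}. Since $u_{n+1}$ solves $(\widehat Q_\delta)$ for $\delta=\delta_n(u_{n+1})$ by \eqref{ZZ-E3.13}, Lemma \ref{ZZ-L3.7} yields $\delta_n(u_{n+1})\in F(\bar u)+F'(\bar u)(u_{n+1}-\bar u)+N(u_{n+1})$, which after unwinding the definition \eqref{ZZ-E3.12} of $\delta_n$ is precisely \eqref{ZZ-E3.10}.

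For uniqueness in $B^p_\rho(\bar u)\cap\uad$, let $\tilde u$ be any solution of \eqref{ZZ-E3.10} there. Rewriting it through \eqref{ZZ-E3.12} as $\delta_n(\tilde u)\in F(\bar u)+F'(\bar u)(\tilde u-\bar u)+N(\tilde u)$ and reading off the pointwise projection formula, which now holds on all of $X$ because we use $N$, I would compare it on $X^\tau_{\bar u}$ with the projection identity \eqref{E2.7} for $\bar u$. The crucial observation is the cancellation $g:=\Phi'(\bar u)(\tilde u-\bar u)-\delta_n(\tilde u)=\Phi(u_n)-\Phi(\bar u)+\Phi'(u_n)(\tilde u-u_n)$, in which the term $\Phi'(\bar u)(\tilde u-\bar u)$ disappears; this matters because $\tilde u$ is only an $N$-solution, so the $L^\infty$-stability of Theorem \ref{ZZ-T3.3} is unavailable to bound $\|\tilde u-\bar u\|_{L^\infty(X)}$. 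Using \eqref{E2.1}, \eqref{E2.2}, and $\tilde u,u_n\in B^p_\rho(\bar u)$ one gets $\|g\|_{L^\infty(X)}\le(L_{\bar u,\Phi}+2M_{\bar u,\Phi'})\rho\le\tau$ via \eqref{ZZ-E3.26}. Since $\Phi(\bar u)+\tikhonov\bar u$ is $\tau$-separated on $X^\tau_{\bar u}$, a perturbation of size at most $\tau$ cannot move the projection off the active bound, whence $\tilde u=\bar u$ on $X^\tau_{\bar u}$, i.e.\ $\tilde u\in\widehat\uad$. Then $N(\tilde u)\subseteq\widehat N(\tilde u)$ shows $\tilde u$ solves \eqref{ZZ-E3.11}, so $\tilde u=u_{n+1}$ by Lemma \ref{le::existuniqhatQn}.

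Finally, for $(Q_n)$, write $\mQ_n(u)=\tfrac12 J''(u_n)(u-u_n)^2+J'(u_n)u$ for its objective. Any local solution in $B^p_\rho(\bar u)$ satisfies the first-order necessary condition, which by convexity of $\uad$ (so that all directions $u-\tilde u$, $u\in\uad$, are admissible for small steps) is exactly \eqref{ZZ-E3.10}; the previous paragraph then forces it to equal $u_{n+1}$, giving uniqueness. To see that $u_{n+1}$ is genuinely a local solution, I would verify a second-order sufficient condition: since $\mQ_n$ is quadratic, $\mQ_n(u)-\mQ_n(u_{n+1})=\mQ_n'(u_{n+1})(u-u_{n+1})+\tfrac12 J''(u_n)(u-u_{n+1})^2$ exactly, and I would split $u-u_{n+1}=w_1+w_2$ along $X^\tau_{\bar u}$ and its complement, so that $w_2\in E_{\bar u}^\tau$. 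Strict complementarity forces the gradient of $\mQ_n$ at $u_{n+1}$ to be $\tau$-separated with the correct sign on $X^\tau_{\bar u}$, so the first-order term dominates $\eta\|w_1\|_{L^1(X)}$ for some $\eta>0$, while \eqref{E2.9} makes the second-order term coercive in $w_2$; the indefinite $w_1$- and cross-contributions are absorbed once $u$ is close to $u_{n+1}$ by using $\|w_1\|_{L^2(X)}^2\le\|w_1\|_{L^\infty(X)}\|w_1\|_{L^1(X)}$.

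I expect the main obstacle to be this last second-order estimate, namely showing that $u_{n+1}$, which by construction minimizes only over the smaller set $\widehat\uad$, is in fact a local minimizer over the full $\uad$. The difficulty is that $J''(u_n)$ is indefinite off $E_{\bar u}^\tau$, so positivity must be recovered by playing the strict-complementarity first-order gain on $X^\tau_{\bar u}$ against the loss of coercivity there. The algebraic cancellation in $g$ used for the uniqueness step is the other delicate point, since it is precisely what allows one to dispense with the $L^\infty$-stability that is not at hand for a generic $N$-solution.
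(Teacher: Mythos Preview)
Your treatment of the first two assertions coincides with the paper's: it too combines Lemma~\ref{ZZ-L3.5} (for $u_{n+1}\in B^p_\rho(\bar u)$), Lemma~\ref{ZZ-L3.4}(iii) and \eqref{ZZ-E3.26} to feed Lemma~\ref{ZZ-L3.7}; and for uniqueness it performs exactly the cancellation you isolate, writing $g=\Phi(u_n)-\Phi(\bar u)+\Phi'(u_n)(\tilde u-u_n)$, bounding it by $(L_{\bar u,\Phi}+2M_{\bar u,\Phi'})\rho$, and concluding $\tilde u\in\widehat\uad$.

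The third assertion is where the approaches diverge. The paper does not split $u-u_{n+1}=w_1+w_2$ and balance first-order gain against second-order loss. Instead, it computes the critical cone $C^n_{u_{n+1}}$ of $(Q_n)$ at $u_{n+1}$ and shows $C^n_{u_{n+1}}\subset E^\tau_{\bar u}$: the same strict-complementarity estimate you plan to use (that $\Phi'(u_n)(u_{n+1}-u_n)+\Phi(u_n)+\tikhonov u_{n+1}$ has a definite sign on each of $X^{\tau\pm}_{\bar u}$) forces every critical direction to vanish on $X^\tau_{\bar u}$. Then \eqref{E2.9} gives $\mQ''(u_{n+1})v^2=J''(u_n)v^2\ge\tfrac{\nu}{2}\|v\|_{L^2}^2$ on the whole critical cone, and the abstract SSC theorem of \cite{Casas-Troltzsch2012} delivers strict local optimality in $L^2(X)$.

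Your route is more elementary but, as written, only yields $L^\infty$-local minimality: the absorption step $\|w_1\|_{L^2}^2\le\|w_1\|_{L^\infty}\|w_1\|_{L^1}$ needs $\|w_1\|_{L^\infty}$ small, and this is not implied by $\|u-u_{n+1}\|_{L^p}$ small when $p<\infty$. The critical-cone argument avoids the issue because the indefinite directions are excluded from the cone outright rather than dominated by first-order terms; the cost is citing an external SSC-to-growth theorem whose proof internally handles the two-norm discrepancy via the compactness in Assumption~\ref{A2.2}. If you want to keep your direct estimate and still reach $L^2$-local optimality, you would have to replicate that machinery rather than rely on the $L^1$--$L^\infty$ interpolation.
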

\begin{proof}
Using the condition $\rho\leq \rho_p^\star$ we first deduce from Lemma \ref{le::existuniqhatQn} that \eqref{ZZ-E3.11} has a unique solution $u_{n+1}\in\widehat{\uad}$. From Lemma \ref{ZZ-L3.5}, we know that $u_{n+1}\in B^p_\rho(\bar u)$, and hence Lemma \ref{ZZ-L3.4}(iii) and \eqref{ZZ-E3.26} imply that
\[(1+n_{p,\infty}\hat c M_{\bar u,\Phi'}) \|\delta_n(u_{n+1})\|_{L^\infty(X)}\leq (1+n_{p,\infty}\hat c M_{\bar u,\Phi'}) (L_{\bar u,\Phi} +3 M_{\bar u,\Phi'})\rho\leq \tau.\]
In Lemma \ref{ZZ-L3.2} it was proved that for any $\delta\in L^2(X)$, $(\widehat{Q}_\delta)$ is a strictly convex problem and that its unique solution is characterized by equation \eqref{ZZ-E3.14}. We have that $u_{n+1}$ is the unique solution of \eqref{ZZ-E3.11} and that this equation is equivalent to \eqref{ZZ-E3.13} which is just \eqref{ZZ-E3.14} for the case $\delta = \delta_n(u_{n+1})$. Therefore $u_{n+1}$ is the unique solution of $(\widehat{Q}_{\delta_n(u_{n+1})})$. So we can apply Lemma \ref{ZZ-L3.7}, to obtain that
\[\delta_n(u_{n+1})\in F(\bar u)+F'(\bar u)(u_{n+1}-\bar u) + N(u_{n+1}),\]
which is equivalent to \eqref{ZZ-E3.10}.

Suppose now that $u\in B^p_\rho(\bar u)\cap\uad$ solves \eqref{ZZ-E3.10}.
For every $x\in X$ we have
\[  u(x) = \proj_{[\alpha,\beta]}\left[\frac{-1}{\tikhonov} \big(\Phi'(\bar u) (u-\bar u)+\Phi(\bar u)-\delta_n(u)\big)(x)\right]
\]
For $x\in X^{\taup}_{\bar u}$, $\Phi(\bar u)(x)+\tikhonov\alpha > \tau$.
Using this, the definition of $\delta_n(u)$, \eqref{E2.1} and \eqref{E2.2}, that both $u_n$ and $u$ are in $B^p_\rho(\bar u)$ and condition \eqref{ZZ-E3.26} , we obtain
\begin{align*}
  -\big(\Phi'(\bar u) & (u-\bar u)+\Phi(\bar u)-\delta_n(u)\big)(x)\\
  & =  -\Phi'(u_n)(u-u_n)(x)+\Phi(\bar u)(x) -\Phi(u_n)(x)-\Phi(\bar u)(x) \\
  & \leq  \|\Phi'(u_n)(u-u_n)\|_{L^\infty(X)} + \|\Phi(\bar u) -\Phi(u_n)\|_{L^\infty(X)} -\Phi(\bar u)(x) \\
  & < (2M_{\bar u,\Phi'}+L_{\bar u,\Phi})\rho +\tikhonov\alpha-\tau\leq \tau +\tikhonov\alpha-\tau = \tikhonov\alpha.
\end{align*}
So, from the projection formula, $u(x)=\alpha$. With a similar argument, we obtain that $u(x)=\beta$ if $x\in X^{\taum}_{\bar u}$. Therefore, $u\in\widehat{\uad}$ and consequently it solves \eqref{ZZ-E3.11}. Since the unique solution of \eqref{ZZ-E3.11} is $u_{n+1}$, then $u=u_{n+1}$.

Now, we prove that $u_{n+1}$  is a local solution of $(Q_n)$ in $B^p_\rho(\bar u)$. We collect for further reference that, hidden in the second and fourth lines of the last chain of inequalities, we have just proven that
\begin{equation}\label{ZZ-E3.27}
  \Phi'(u_n)(u_{n+1}-u_n)(x)+\Phi(u_n)(x) > -\tikhonov\alpha\text{ for a.a.}[\mu]\ x\in X^{\taup}_{\bar u}.
\end{equation}
As in the proof of Lemma \ref{le::existuniqhatQn}, we denote $\mQ(u)= \frac{1}{2}J''(u_n) (u-u_n)^2 + J'(u_n) u.$
The cone of critical directions for the problem $(Q_n)$ at $u_{n+1}$ is given by
\begin{align*}C^n_{u_{n+1}}&=\{v\in L^2(X):
  v(x)\geq 0 \text{ if }u_{n+1}=\alpha,\
  v(x)\leq 0  \text{ if }u_{n+1}=\beta, \\
&  v(x) = 0  \text{ if }(\Phi'(u_n) + \tikhonov\mathbb{I}) (u_{n+1}-u_n)(x) +
  \Phi(u_n)(x) + \tikhonov u_n(x)\neq 0.
\text{ a.e. }[\mu]\},
\end{align*}
where $\mathbb{I}:L^2(X) \longrightarrow L^2(X)$ denotes the identity operator. Let us show that $u_{n+1}$ satisfies the second-order condition $\mQ''(u_{n+1})v^2>0$ for all $v\in C^n_{u_{n+1}}\setminus\{0\}$. This is a sufficient condition for strict local optimality in the sense of $L^2(X)$; see, for instance,\cite[Theorem 2.3]{Casas-Troltzsch2012}.
First we note that $\mQ''(u_{n+1})v^2 = J''(u_n)v^2$ for all $v\in L^2(X)$. Now, using that $\rho\leq \rho_{\textsc{ssc}}$ we deduce from Lemma \ref{L2.7}(ii) that $J''(u_n)v^2\geq \frac{\nu}{2}\|v\|^2$ for all $v\in E^\tau_{\bar u}$. So the claim will follow if we prove that $C^n_{u_{n+1}}\subset E^\tau_{\bar u}$. Let $v$ be an element of $C^n_{u_{n+1}}$ and consider $x\in X^{\taup}_{\bar u}$. Using \eqref{ZZ-E3.27} and the fact that $u_{n+1}\geq \alpha$, we  deduce that
\begin{align*}
   (\Phi'(u_n) + \tikhonov\mathbb{I})& (u_{n+1}-u_n)(x) +
  \Phi(u_n)(x) + \tikhonov u_n(x)  \\
  = & \Phi'(u_n)(u_{n+1}-u_n)(x) +
  \Phi(u_n)(x) + \tikhonov u_{n+1}(x) \\
   >
  & -\tikhonov\alpha + \tikhonov u_{n+1}(x) \geq -\tikhonov\alpha +\tikhonov\alpha =0,
\end{align*}
and hence $v(x)=0$. A symmetric argument works for $x\in X^{\taum}_{\bar u}$. Therefore $v\in E^\tau_{\bar u}$ and $u_{n+1}$  is a local solution of $(Q_n)$.

Furthermore, if $u\in\uad\cap B_\rho^p(\bar u)$ is a local solution of $(Q_n)$, then it satisfies the first order optimality condition for $(Q_n)$, equation \eqref{ZZ-E3.10}. We have already proved that $u_{n+1}$ is the unique solution of \eqref{ZZ-E3.10} in $B_\rho^p(\bar u)$ and hence $u=u_{n+1}$.
\end{proof}
We finish with the main result of the paper.
\begin{theorem}\label{ZZ-T3.9}Let $\bar u\in\uad$ be a local solution of \Pb satisfying the second-order optimality condition $J''(\bar u)v^2 >0$ $\forall v \in C_{\bar u} \setminus\{0\}$ and the strict complementarity condition stated in Assumption \ref{A2.6}. Then, there exists $\bar\rho > 0$ such that for every $0 < \rho \le \bar\rho$ and $u_0 \in B_\rho^p(\bar u)$ we have: for every $n\geq 0$, $(Q_n)$ has a unique local solution $u_{n+1}$ in $B^p_\rho(\bar u)$, the sequence $\{u_n\}_{n = 0}^\infty$ generated by the algorithm \ref{Alg1} converges to $\bar u$ in $L^\infty(X)$, and for all $q\in[p,\infty]$,
\[
\|u_{n+1}-\bar u\|_{L^{q}(X)} \leq n_{p,q}^2 \hat c L_{\bar u,\Phi'} \|u_n-\bar u\|_{L^{q}(X)}^2\ \forall n\geq 1.
\]
\end{theorem}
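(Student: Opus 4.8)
The plan is to assemble the theorem from the results of Sections \ref{S3.1a} and \ref{ZZ-S3.2}; the only genuine work is to fix a single admissible radius $\bar\rho$ and to identify the sequence produced by Algorithm \ref{Alg1} with the one generated by the modified iteration \eqref{ZZ-E3.11}. First I would choose $\bar\rho>0$ small enough that the three threshold conditions appearing in the preceding results hold simultaneously: $\bar\rho\le\rho_p^\star$ (so that Lemma \ref{ZZ-L3.5} applies with $q=p$), the bound \eqref{ZZ-E3.26} required by Lemma \ref{ZZ-L3.8}, and the bound \eqref{ZZ-E3.22} required by Corollary \ref{ZZ-C3.6}. Each of these has the form $C\rho\le C'$ with $C,C'>0$ independent of $\rho$, hence it holds for every $\rho\in(0,\bar\rho]$ as soon as it holds for $\bar\rho$. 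The one point needing care is that \eqref{ZZ-E3.22} must hold for every $q\in(p,\infty]$; since $n_{p,q}\le\max\{1,\mu(X)^{1/p}\}=:\bar n<\infty$ uniformly in $q$, the quantity $\rho_q^\star$ of \eqref{ZZ-E3.18} is bounded below by $\min\{\rho_{\textsc{ssc}}/\bar n,\ 1/(2\bar n^2\hat c L_{\bar u,\Phi'})\}>0$, so replacing $\rho_q^\star$ by this positive lower bound yields a single condition on $\bar\rho$ valid for all $q$.

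Fix now $\rho\in(0,\bar\rho]$ and $u_0\in B_\rho^p(\bar u)$. Applying Lemma \ref{ZZ-L3.5} with $q=p$, the sequence $\{u_n\}$ generated by the modified step \eqref{ZZ-E3.11} is well defined, satisfies $u_n\in B_\rho^p(\bar u)$ for every $n\ge0$, converges to $\bar u$ in $L^p(X)$, and obeys the quadratic estimate \eqref{ZZ-E3.19}. The conceptually central step is to show that this sequence is exactly the one produced by Algorithm \ref{Alg1}, which solves the quadratic problems $(Q_n)$ over $\uad$ rather than over $\widehat{\uad}$. This is supplied by Lemma \ref{ZZ-L3.8}: since at each stage $u_n\in B_\rho^p(\bar u)$ and $\bar\rho$ was chosen so that \eqref{ZZ-E3.26} holds, the modified iterate $u_{n+1}$ solves the original generalized equation \eqref{ZZ-E3.10} and is the unique local solution of $(Q_n)$ in $B_\rho^p(\bar u)$. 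Arguing by induction, with the common base case $u_0$, the uniqueness clause forces Algorithm \ref{Alg1} (understood as selecting the local solution lying in $B_\rho^p(\bar u)$) to reproduce the modified sequence, and in particular establishes the asserted existence and uniqueness of a local solution $u_{n+1}$ of $(Q_n)$ in the ball.

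It remains to upgrade the mode of convergence. For $q=p$ the convergence and the quadratic estimate are already contained in Lemma \ref{ZZ-L3.5}. For $q\in(p,\infty]$ I would invoke Corollary \ref{ZZ-C3.6}, whose hypothesis \eqref{ZZ-E3.22} is guaranteed by the choice of $\bar\rho$: it yields convergence of the same sequence in $L^\infty(X)$ together with the bound $\|u_{n+1}-\bar u\|_{L^q(X)}\le n_{p,q}^2\hat c L_{\bar u,\Phi'}\|u_n-\bar u\|_{L^q(X)}^2$ for all $n\ge1$. Combining the two cases gives the quadratic estimate for every $q\in[p,\infty]$ and $n\ge1$, as stated; when $p=\infty$ the range $(p,\infty]$ is empty and Lemma \ref{ZZ-L3.5} alone already delivers the $L^\infty(X)$ convergence.

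I do not anticipate a genuine analytic obstacle, since every estimate was established in the preceding lemmas. The delicate points are purely organizational: (i) producing a single radius $\bar\rho$ meeting all threshold conditions uniformly in $q$, handled by the lower bound on $\rho_q^\star$ above, and (ii) the logical identification of the second paragraph, which is precisely what converts the abstract modified iteration into a statement about Algorithm \ref{Alg1} carried out without prior knowledge of the set $X_{\bar u}^\tau$.
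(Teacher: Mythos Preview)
Your proposal is correct and follows essentially the same route as the paper's proof: choose $\bar\rho\in(0,\rho_p^\star]$ satisfying \eqref{ZZ-E3.22} and \eqref{ZZ-E3.26}, use Lemma~\ref{ZZ-L3.8} to identify the iterates of Algorithm~\ref{Alg1} with those of the modified iteration \eqref{ZZ-E3.11}, and then invoke Lemma~\ref{ZZ-L3.5} and Corollary~\ref{ZZ-C3.6} for the quadratic convergence in $L^q(X)$. Your treatment is in fact slightly more careful than the paper's terse proof, since you make explicit why a single $\bar\rho$ can satisfy \eqref{ZZ-E3.22} uniformly in $q\in(p,\infty]$ via the bound $n_{p,q}\le\max\{1,\mu(X)^{1/p}\}$; the paper leaves this implicit.
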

\begin{proof}
Let us take $\bar \rho\in(0,\rho_p^\star]$ satisfying \eqref{ZZ-E3.22} and \eqref{ZZ-E3.26}, and an arbitrary number $\rho \in (0,\bar\rho]$.  Given $u_n\in B^p_\rho(\bar u)$, by Lemma \ref{ZZ-L3.8}, problem $(Q_n)$ has a unique local solution $u_{n+1}$ in $B^p_\rho(\bar u)$. Thus the sequence is well defined. Again by Lemma \ref{ZZ-L3.8} we infer that $u_{n+1}\in\widehat{\uad}$ and solves \eqref{ZZ-E3.11}. So we can apply Lemma \ref{ZZ-L3.5} and Corollary \ref{ZZ-C3.6} to obtain the quadratic convergence in 
$L^q(X)$.
\end{proof}
\begin{remark}
In the case $p>2$, since $-\infty<\umin<\umax<\infty$, for every $\rho >0$ we have that $B^2_r(\bar u)\cap\uad \subset B^p_{\rho}(\bar u)\cap\uad$
with $r = (\rho/\ell)^\frac{p}{2}$, where $\ell = \vert \umax-\umin\vert ^{\frac{p-2}{p}}$. Therefore, the method can be initialized with an initial point in an $L^2(X)$ neighborhood of $\bar u$ also in this case.
\end{remark}

\subsection{Extension to several controls}\label{ZZ-S3.3}
In a similar way, the technique can be extended to the following framework. For $n\geq 1$ and $i=1,\ldots,n$, consider $(X_i,\mathcal{S}_i,\mu_i)$ measure spaces with $\mu(X_i)<\infty$ and numbers $p^i\in[2,\infty]$, $\tikhonov_i>0$ and $-\infty\leq \alpha_i<\beta_i\leq\infty$. We require $-\infty< \alpha_i<\beta_i<\infty$ if $p^i>2$. Denote $B=\prod_{i=1}^n L^{p^i}(X_i)$ and $B_\infty = \prod_{i=1}^n L^{\infty}(X_i)$. We assume the existence of an open set $\mA\subset B$ and a function $\mJ:\mA\to\mathbb{R}$ of class $C^2$. We also assume the existence of a $C^1$ function $\bm\Phi:\mA\to B_\infty$ satisfying
\[
\mJ'(\bm u)\bm v=\sum_{i=1}^n\int_{X_i}\Phi_i(\bm u) v_i\dmu_i\ \forall\bm u\in\mA\text{ and }\forall \bm v\in B
\]
with the linear mappings $\Phi_i'(\bm u): B\to L^\infty(X_i)$ satisfying assumptions analogous to \ref{A2.2}, \ref{A2.3} and \ref{A2.4} for $i=1,\ldots,n$.

Consider the optimization problem
\[
\Pb\qquad\min_{\bm u\in\bm U_{\mathrm{ad}}} J(\bm u) = \mJ(\bm u) + \sum_{i=1}^n\frac{\tikhonov}{2}\|u_i\|_{L^2(X_i)}^2,
\]
where $\bm U_{\mathrm{ad}} = \{\bm u\in B:\ \alpha_i\leq u_i\leq\beta_i\text{ a.e. }[\mu_i]\ \forall i=1,\ldots,n\} \subset \mA$. Then it is straightforward to check that Theorem \ref{ZZ-T3.9} applies to this framework.

\section{Application to some optimal control problems}\label{ZZ-S4}
\setcounter{equation}{0}

In this section, we give some examples of optimal control problems that fit into the previous abstract framework. For these examples Theorem \ref{ZZ-T3.9} holds, which implies the quadratic convergence of the SQP method under the no gap second-order sufficient optimality condition and the strict complementarity assumption.

Along this section $\Omega$ denotes an open, bounded, connected subset of $\mathbb{R}^\dimension$, for $1\leq \dimension \leq 3$, with a boundary $\Gamma$ and $\omega\subseteq\Omega$ is a Lebesgue measurable set with positive measure. We will suppose that $\Gamma$ is Lipschitz if $2\leq\dimension\leq 3$. Some extra regularity of $\Gamma$ will be required in some cases. We also denote $Q = \Omega \times (0,T)$, $Q_\omega = \omega\times(0,T)$, and $\Sigma = \Gamma \times (0,T)$ for a given final time $T > 0$. The operator $A$ defined in $\Omega$ and the conormal derivative $\partial_{n_A}$ on $\Gamma$ are given by the expressions
\[
Ay = - \sum_{i,j=1}^{\dimension} \partial_{x_j} [ a_{ij}(x) \partial_{x_i} y] + a_0 y \quad \text{ and } \quad  \partial_{n_A}y = \sum_{i,j = 1}^da_{ij}(x)\partial_{x_i}y(x)n_j(x),
\]
where $n(x)$ denotes the outward unit normal vector to $\Gamma$ at the point $x$, $a_0\geq0$, and $a_0,a_{ij} \in L^{\infty}(\Omega)$ for $1\leq i,j \leq \dimension$. We assume that
\[
\exists \Lambda_A > 0 \ \text{ such that } \sum_{i,j=1}^{\dimension} a_{ij}(x) \xi_i \xi_j \geq \Lambda_A | \xi |^2 \quad \text{ for a.a. } x \in \Omega \ \text{and} \ \forall \xi \in \mathbb R^\dimension.
\]
\subsection{A semilinear elliptic control problem with distributed control}\label{ZZ-S4.1}
Our first problem is
\[
\Pbu \qquad \min_{u\in\uad} J(u): = \mathcal{J}(u)+\frac{\tikhonov}{2}\|u\|_{L^2(\omega)}^2,
\]
where $\tikhonov > 0$, $\uad = \{u\in L^2(\omega)) : \alpha\leq u(x)\leq \beta\text{ for a.a. }x\in\omega)\}$, for some $-\infty\leq\alpha<\beta\leq\infty$, and
\[\mJ(u) = \int_\Omega L(x,y_u(x))\dx,\]
where $y_u\in Y=H^1_0(\Omega)\cap C(\bar\Omega)$ is the unique solution of
\[A y+f(x,y)=\chi_\omega u\text{ in }\Omega,\ y=0\text{ on }\Gamma,\]
where $\chi_\omega$ is the characteristic function of $\omega$, so that $\chi_\omega u$ is the extension by zero of $u$ to $\Omega$.  Here, $f,L:\Omega\times \mathbb{R}\longrightarrow\mathbb{R}$ are Carath\'{e}odory functions of class $C^2$ with respect to the second variable satisfying for almost all $x \in \Omega$:
\begin{itemize}
  \item There exists $\bar p > \dimension/2$ such that $f(\cdot,0)\in L^{\bar p}(\Omega)$,
  \item $\frac{\partial f}{\partial y}(x,y)\geq 0$ for all $y\in\mathbb{R}$,
  \item For all $M>0$ $\exists\,C_{f,M}>0$ such that $\big|\frac{\partial^jf}{\partial y^j}(x,y)\big| \leq C_{f,M}$ for all $|y|\leq M$, $j=1,2$,
    \item For all $M>0$ there exists $K_{f,M}>0$ such that
  \[
  \Big|\frac{\partial^2f}{\partial y^2}(x,y_1)-\frac{\partial^2f}{\partial y^2}(x,y_2)\Big|\leq K_{f,M}|y_1-y_2|   \quad \forall |y_1|,|y_2|\leq M,
  \]
  \item $L(\cdot,0)\in L^1(\Omega)$,
  \item For all $M>0$ $\exists\,\Psi_{L,M}\in L^{\bar p}(\Omega)$ and $\exists\,C_{L,M}>0$ such that $|\frac{\partial L}{\partial y}(x,y)|\leq \Psi_{L,M}(x)$ and $|\frac{\partial^2L}{\partial y^2}(x,y)|\leq C_{L,M}$ for all $|y|\leq M$,
  \item For all $M>0$ there exists $K_{L,M}>0$ such that
  \[
  \Big|\frac{\partial^2L}{\partial y^2}(x,y_1)-\frac{\partial^2L}{\partial y^2}(x,y_2)\Big|\leq K_{L,M}|y_1-y_2|   \quad \forall |y_1|,|y_2|\leq M.
  \]
\end{itemize}

The reader is referred to \cite{CM2024SICON} for the proofs of the next statements.
The mapping $G:L^2(\omega)\longrightarrow Y$, $G(u)=y_u$, is of class $C^2$. For all $u\in L^2(\omega)$ and all $v\in L^2(\omega)$, $z_{u,v} = G'(u)v\in Y$ is the unique solution of
\[Az+\frac{\partial f}{\partial y}(x,y_u)z = \chi_\omega v\text{ in }\Omega,\ z = 0\text{ on }\Gamma.\]
For every $u\in L^2(\omega)$ there exists a unique $\varphi_u\in Y$ solution of the equation
\[A^*\varphi + \frac{\partial f}{\partial y}(x,y_u)\varphi = \frac{\partial L}{\partial y}(x,y_u)\text{ in }\Omega,\ \varphi = 0\text{ on }\Gamma.\]
The mapping $\Phi_Y:L^2(\omega)\to Y$ defined by $\Phi(u) = \varphi_{u}$ is of class $C^1$ and for all $u\in L^2(\omega)$ and all $v\in L^2(\omega)$, $\eta_{u,v}=\Phi_Y'(u)v\in Y$ is the unique solution of
\[A^*\eta+\frac{\partial f}{\partial y}(x,y_u)\eta = \Big[\frac{\partial^2L}{\partial y^2}(x,y_u) - \varphi_u\frac{\partial^2f}{\partial y^2}(x,y_u)\Big] z_{u,v}\text{ in }\Omega,\ \eta = 0\text{ on }\Gamma.\]
We define $\Phi:L^2(\omega)\to L^\infty(\omega)$ as $\Phi(u) = \Phi_Y(u)_{\vert\omega}$.
The mapping $\mJ:L^2(\omega)\to\mathbb{R}$ is of class $C^2$ and for all $u\in L^2(\omega)$ and all $v\in L^2(\omega)$
\[\mJ'(u)v = \int_{\omega} \varphi_u v\dx,\
\mJ''(u)v^2 = \int_{\omega} \eta_{u,v} v\dx.\]

This problem fits in our abstract framework for $X=\omega$ and $\mu$ be the Lebesgue measure, $p=2$. Assumptions \ref{A2.1}, \ref{A2.2} and \ref{A2.3} for $N=0$ are proved in \cite{CM2024SICON}. Assumption \ref{A2.4} is a consequence of the Lipschtiz properties imposed on $L$ and $f$.
The SQP method for this problem is presented in Algorithm \ref{Alg2}.

\LinesNumbered
\begin{algorithm2e}
\caption{SQP method to solve $\Pbu$.}\label{Alg2}
\DontPrintSemicolon
Initialize. Choose $u_0\in B^2_\rho(\bar u)$. Set $n=0$.\;
\For{$n\geq 0$}{
Compute $y_{u_n} = G(u_n)$ solving the nonlinear state equation.
\;
Compute $\varphi_{u_n} = \Phi(u_n)$ solving the linear adjoint state  equation\;
Find a local solution of the constrained quadratic problem
\begin{align*}
\mathrm{(}Q_n\mathrm{)}\quad & \min_{v\in\uad-\{u_n\}} \frac{1}{2}\int_{\omega} \big(\kappa v +\eta_{u_n,v}\big) v\dx + \int_{\omega} (\kappa u_n+ \varphi_{u_n}) v\dx
\end{align*}\;
Name $v_n$ the obtained solution.
\;
Set $u_{n+1}=u_n+v_n$ and $n=n+1$.\;
}
\end{algorithm2e}
Any local minimizer $\bar u$ satisfying the second-order conditions and the strict complementarity condition can be approximated with quadratic order of convergence in the $L^2(\omega)$ and $L^\infty(\omega)$ norms provided an initial point $u_0$ is chosen close enough to $\bar u$ in the $L^2(\omega)$ norm. For every $n\geq 1$, the quadratic program $(Q_n)$ will have a unique local solution in a fixed $L^2(\omega)$ neighborhood of $\bar u$.

\subsection{A semilinear elliptic control problem with distributed and boundary controls}
We will describe this problem for $\dimension = 3$. Let us consider now
\[\Pbd\qquad \min_{\bm u\in\bm U_{\mathrm{ad}}} \mJ(\bm u) + \frac{\tikhonov_1}{2}\|u_1\|^2_{L^2(\Omega)}
+\frac{\tikhonov_2}{2}\|u_2\|^2_{L^2(\Gamma)}.\]
Here $\kappa_i>0$ for $i=1,2$,
\begin{align*}
\bm U_{\mathrm{ad}} = \{\bm u = (u_1,u_2) \in L^2(\Omega)\times L^p(\Gamma):&\ \alpha_1\leq u_1(x)\leq\beta_1\text{ for a.e. }x\in\Omega,\text{ and }\\
&\ \alpha_2\leq u_2(x)\leq\beta_2\text{ for a.e. }x\in\Gamma\},
\end{align*} where $-\infty\leq\alpha_1<\beta_1\leq\infty$, $-\infty<\alpha_2<\beta_2<\infty$ and $p>2$ is a fixed exponent. The functional $\mJ$ is defined as
\[\mJ(\bm u) = \int_\Omega L(x,y_{\bm u}(x))\dx,\]
where $y_{\bm u}\in Y=H^1(\Omega)\cap C(\bar\Omega)$ is the unique solution of
\[A y + f(x,y)=u_1\text{ in }\Omega,\ \partial_{n_A} y = u_2\text{ on }\Gamma.\]
The functions $f$ and $L$ satisfy the same assumptions as in Section \ref{ZZ-S4.1}. We will further assume the existence of a set $E\subset\Omega$ of positive measure such that $a_0(x)>0$ for almost all $x\in E$.

This problem fits into the framework of Section \ref{ZZ-S3.3} for $n=2$, $X_1=\Omega$, $\mu_1$ the $\dimension$-dimensional Lebesgue measure, $X_2=\Gamma$, $\mu_2$ the $(\dimension-1)$-dimensional Lebesgue measure, $p^1=2$, $p^2=p$.

The mapping $G:L^2(\Omega)\times L^p(\Gamma)\longrightarrow Y = H^1(\Omega) \cap C(\bar\Omega)$ defined by $G(\bm u)=y_{\bm u}$ is of class $C^2$.
For all $\bm u, \bm v\in L^2(\Omega)\times L^p(\Gamma)$, $z_{\bm u,\bm v} = G'(\bm u)\bm v\in Y$ is the unique solution of
\[Az+\frac{\partial f}{\partial y}(x,y_{\bm u})z = v_1\text{ in }\Omega,\ \partial_{n_A} z = v_2\text{ on }\Gamma.\]
We remark here that for every $\bm v\in L^2(\Omega)\times L^2(\Gamma)$, the above equation also has a unique solution $z_{\bm u,\bm v}\in H^1(\Omega)$.
For every $\bm u\in L^2(\Omega)\times L^p(\Gamma)$ there exists a unique $\varphi_{\bm u}\in Y$ solution of the adjoint equation
\[
A^*\varphi+\frac{\partial f}{\partial y}(x,y_{\bm u})\varphi = \frac{\partial L}{\partial y}(x,y_{\bm u})\text{ in }\Omega,\ \partial_{n_{A^*}} \varphi = 0\text{ on }\Gamma.
\]
The mapping $\bm\Phi:L^2(\Omega)\times L^p(\Gamma)\longrightarrow Y \times [H^{\frac{1}{2}}(\Gamma) \cap C(\Gamma)] \subset L^\infty(\Omega)\times L^\infty(\Gamma)$ is defined by $\Phi_1(\bm u) = \varphi_{\bm u}$ and $\Phi_2(\bm u) = \mathrm{tr}\varphi_{\bm u}$, where $\mathrm{tr}:H^1(\Omega) \longrightarrow H^{\frac{1}{2}}(\Gamma)$ is the trace operator. For every  $\bm u, \bm v\in L^2(\Omega)\times L^p(\Gamma)$, $\eta_{\bm u,\bm v}\in Y$ is the unique solution of
\[
A^*\eta+\frac{\partial f}{\partial y}(x,y_{\bm u})\eta = \Big[\frac{\partial^2 L}{\partial y^2}(x,y_{\bm u}) - \varphi_{\bm u} \frac{\partial^2 f}{\partial y^2}(x,y_{\bm u})\Big] z_{{\bm u},{\bm v}}\text{ in }\Omega,\ \partial_{n_{A^*}}\eta = 0\text{ on }\Gamma.
\]
We have that $\Phi_1'(\bm u)\bm v = \eta_{\bm u,\bm v}\in Y$ and $\Phi_2'(\bm u)\bm v = \mathrm{tr} \eta_{\bm u,\bm v} \in H^{\frac{1}{2}}(\Gamma) \cap C(\Gamma)$. In this case, notice that for $\bm v\in L^2(\Omega)\times L^2(\Gamma)$, using that $\partial^2_{yy}L(x,y_{\bm u}) - \varphi_{\bm u} \partial^2_{yy} f(x,y_{\bm u})\in L^\infty(\Omega)$ and $z_{{\bm u},{\bm v}}\in H^1(\Omega)\subset L^2(\Omega)$, we do not lose any regularity and $\eta_{\bm u,\bm v}\in Y$. Then, it is easy to deduce that assumptions \ref{A2.2} and \ref{A2.3}, for $p_0=2$ and $p_1=p$ hold. Assumption \ref{A2.4} follows from the Lipschitz properties of $f$ and $L$.

\subsection{The velocity tracking control problem}\label{S3.4}
Let us consider the problem analyzed in \cite{Wachsmuth2007}; see also \cite{Casas-Chrysafinos2016}. In this subsection we assume that $\dimension = 2$ and $\Gamma$ is of class $C^2$. We consider the problem
\[\Pbt\qquad\min_{\bm u\in\uad}J(\bm u) = \int_Q|\bm y_{\bm u}-\bm y_Q|^2\dx\dt+\frac{\tikhonov}{2}\int_{Q_\omega} |\bm u|^2\dx\dt,\]
where $\bm y_Q\in\bm L^{q}(Q)$ for some fixed $2 < q < \infty$ and
\[
\bm \uad = \{\bm u\in \bm L^2(Q_\omega) : \alpha_i\leq u_i\leq\beta_i \text{ for a.e }(x,t)\in Q_\omega\text{ and }i=1,2\}.
\]
Here $-\infty \le \alpha_i<\beta_i \le \infty$ for $i=1,2$, $\tikhonov >0$ and $\bm y_{\bm u}$ is the solution of the evolutionary Navier-Stokes system
\begin{equation}
\left\{\begin{array}{l}\displaystyle\frac{\partial\bm y}{\partial t} - \nu\Delta\bm y + (\bm y \cdot \nabla)\bm y + \nabla\mathfrak{p} = \chi_\omega\bm u \text{ in } Q,\\
\mathrm{div} \bm y = 0 \text{ in } Q, \ \bm y = 0 \text{ on } \Sigma,\ \bm y(0) = \bm y_0 \text{ in } \Omega.
\end{array}\right.
\label{NS}
\end{equation}
For $r\geq 2$, we consider the spaces $\bm L^r(\Omega)=L^r(\Omega)^2$, $\bm W^{1,r}_0(\Omega)= W^{1,r}_0(\Omega)^2$,
$\bm W^{2,r}(\Omega)=W^{2,r}(\Omega)^2$,
\begin{align*}
\bm V= &\{\bm y\in\bm W^{1,2}_0(\Omega):\ \mathrm{div}\bm y=0\},\\
\bm W^{2,1}_r = &\{\bm y\in L^r(0,T;\bm W^{2,r}(\Omega)\cap \bm V) :\partial_t\bm y\in L^r(Q)\}.
\end{align*}
Let $\bm y_0$ be an element of $\bm V$. For every $\bm u\in\bm L^2(Q_\omega)$ there exists a unique solution $\bm y_{\bm u}\in\bm W^{2,1}_2$ of the Navier-Stokes system \eqref{NS}. The mapping $G:\bm L^2(Q_\omega)\longrightarrow \bm W^{2,1}_2$ given by $G(\bm u) = \bm y_{\bm u}$ is of class $C^\infty$. For all $\bm u, \bm v\in \bm L^2(Q_\omega)$, $\bm z_{\bm u,\bm v}=G'(\bm u)\bm v \in \bm W^{2,1}_2$ is the unique solution of
\[
\left\{\begin{array}{l}\displaystyle\frac{\partial\bm z}{\partial t} - \nu\Delta\bm z + (\bm y \cdot \nabla)\bm z + (\bm z \cdot \nabla)\bm y + \nabla\mathfrak{q} = \chi_\omega\bm v \text{ in } Q,\\
\mathrm{div} \bm z = 0 \text{ in } Q, \  \bm z = 0 \text{ on } \Sigma,\ \bm z(0) = \bm 0 \text{ in } \Omega.\end{array}\right.
\]

For every $\bm u\in\bm L^2(Q_\omega)$ there exists a unique $\bm\varphi_{\bm u}\in \bm W^{2,1}_{q}$ solution of the adjoint state equation
\[
\left\{\begin{array}{l}\displaystyle -\frac{\partial\bm\varphi}{\partial t} - \Delta\bm\varphi - (\bm y \cdot \nabla)\bm\varphi - (\nabla\bm\varphi)^T\bm y + \nabla\pi = \bm y_{\bm u} - \bm y_d\text{ in } Q,\\
\mathrm{div} \bm\varphi = 0 \text{ in } Q, \  \bm\varphi = 0 \text{ on } \Sigma,\ \bm\varphi(T) = \bm 0 \text{ in } \Omega.\end{array}\right.
\]
The $\bm W^{2,1}_{q}$ regularity of $\bm\varphi_{\bm u}$ follows from the fact that $\bm y_{\bm u} - \bm y_d \in \bm L^{q}(Q)$. This is assumed for $\bm y_d$ and it is satisfied by $\bm y_{\bm u}$ due to the inclusion $\bm W^{2,1}_2 \subset L^\infty(0,T;\bm H^1(\Omega))$; see \cite[Theorem III.4.10.2]{Amann1995}. Let us remark that $\bm W^{2,1}_2\not\subset\bm L^\infty(Q)$ but $q>2$ implies that $\bm W^{2,1}_{q}\subset C([0,T];W^{1,p}(\Omega)) \subset \bm C(\bar Q)$; \cite[Theorem III.4.10.2]{Amann1995}.

For all $\bm u, \bm v \in \bm L^2(Q_\omega)$, there exists a unique $\bm\eta_{\bm u,\bm v}  \in \bm W^{2,1}_{q}$ solution of
\[
\left\{\begin{array}{l}\displaystyle -\frac{\partial\bm\eta}{\partial t} - \nu\Delta\bm\eta - (\bm y \cdot \nabla)\bm\eta - (\nabla\bm\eta)^T\bm y + \nabla\phi = (\bm z_{\bm u,\bm v}\cdot\nabla)\bm\varphi_{\bm u} + (\nabla\bm\varphi_{\bm u})^T\bm z_{\bm u,\bm v}\text{ in } Q,\\
\mathrm{div} \bm\eta = 0 \text{ in } \Omega, \  \bm\eta = 0 \text{ on } \Gamma,\ \bm\eta(T) = \bm 0 \text{ in } \Omega.\end{array}\right.
\]
To check this it is enough to prove that the right hand side of the above pde belongs to $\bm L^{q}(Q)$. To this end we first observe that $\bm z_{\bm u,\bm v} \in L^{q}(0;T;L^\infty(\Omega))$. This is a straightforward consequence of \cite[Theorem 3]{Amann2001} taking $X_0 = \bm L^2(\Omega)$, $X_1 = \bm H^2(\Omega)$, $s < \frac{1}{2}$, $\theta > \frac{1}{2}$, and $s$ and $\theta$ close enough to $\frac{1}{2}$.  Then, we have
\begin{align*}
&\int_Q|\bm z_{\bm u,\bm v}|^{q}|\nabla\bm\varphi_{\bm u}|^{q}\dx \dt \le \int_0^T\|\bm z_{\bm u,\bm v}(t)\|^{q}_{L^\infty(\Omega)}\int_\Omega|\nabla\bm\varphi_{\bm u}|^{q}\dx \dt\\
&\le \int_0^T\|\bm z_{\bm u,\bm v}(t)\|^{q}_{L^\infty(\Omega)}\dt\|\nabla\bm\varphi_{\bm u}\|^{q}_{L^\infty(0,T;L^{q}(\Omega))} < \infty.
\end{align*}
In our framework, $X=Q_\omega$, $\mu$ is the Lebesgue measure and $p=2$. Since $\bm W^{2,1}_{q} \subset \bm L^\infty(Q)$, the mapping $\bm\Phi:\bm L^2(Q_\omega)\longrightarrow \bm L^\infty(Q_\omega)$ given by $\bm\Phi(\bm u)=\bm\varphi_{\bm u\vert Q_\omega}$ is of class $C^\infty$ and for all $\bm u, \bm v \in \bm L^2(Q_\omega)$, $\bm\Phi'(\bm u)\bm v  =\bm\eta_{\bm u,\bm v\vert Q_\omega}$.
The assumption \eqref{E2.2} follows from the compactness of the embedding $\bm W^{2,1}_2\subset\bm L^2(Q)$ and the generalized mean value theorem.
The assumption \ref{A2.3} is satisfied with $p_0=p=2$. Assumption \ref{A2.4} follows from the generalized mean value theorem and the fact that $\bm\Phi:\bm L^2(Q_\omega)\longrightarrow  \bm L^\infty(Q_\omega)$ is of class $C^2$.

\subsection{A semilinear parabolic bilinear control problem with boundary control}\label{S3.5}
For a complete analysis of the following problem te reader is referred to \cite{CM2025}.
We focus on the problem
\[\Pbc\qquad\min_{u\in\uad}J(u) = \mJ(u)+\frac{\tikhonov}{2}\int_\Sigma u(x,t)^2\dx\dt,\]
where $\uad = \{u\in L^p(\Sigma):\umin\leq u(x,t)\leq\umax\text{ for a.e }(x,t)\in \Sigma\}$ with $p = 2(\dimension+1)$ and $0\leq \alpha<\beta <\infty$.
Here, $\mJ(u) = \frac{1}{2}\|y_u-y_d\|^2_{L^2(Q)}$ where $y_d\in L^{\tilde p}(Q)$ for some $\tilde p > 1+\dimension/2$ and $y_u\in Y=W(0,T)\cap L^\infty(Q)$ is the solution of the equation
\begin{equation}
\left\{\begin{array}{l} \displaystyle\frac{\partial y}{\partial t} + Ay + f(x,t,y) = 0\ \  \mbox{in } Q,\vspace{2mm}\\  \partial_{\conormal_A} y + uy = g\ \ \mbox{on }\Sigma, \ y(x,0) = y_0(x) \ \ \text{in } \Omega. \end{array}\right.
\label{E4.2}
\end{equation}
We assume that $g\in L^{\bar p}(\Sigma)$ for some $\bar p>\dimension +1$, $y_0\in L^\infty(Q)$ and $f:Q\times \mathbb{R}\longrightarrow \mathbb{R}$ is a Carath\'{e}odory function of class $C^2$ w.r.t. the second variable such that for almost all $(x,t) \in Q$
\begin{itemize}
  \item $f(\cdot,\cdot,0)\in L^{\tilde p}(Q)$.
  \item $\exists\, C_f\in\mathbb{R}$ such that $\frac{\partial f}{\partial y}(x,t,y)\geq C_{f}$ for all $y\in\mathbb{R}$.
  \item $\forall M>0$ $\exists\,C_{f,M}>0$ such that $\Big|\frac{\partial^jf}{\partial y^j}(x,t,y)\Big|\leq C_{f,M}$ $\forall |y|\leq M$, $j=1,2$.
  \item $\forall M>0$ $\exists\,K_{f,M}>0$ such that $\Big|\frac{\partial^2 f}{\partial y^2}(x,t,y_1)-\frac{\partial^2 f}{\partial y^2}(x,t,y_2)\Big|\leq K_{f,M}|y_1-y_2|$ $\forall |y_1|,|y_2|\leq M$.
\end{itemize}

Under the previous assumptions there exists an open set $\mathcal{A}\subset L^p(\Sigma)$ such that $\uad\subset\mA$ and for every $u\in\mA$
the mapping $G:\mA\longrightarrow Y$, given by $G(u) = y_u$ solution of \eqref{E4.2}, is well defined and of class $C^2$. For all $u\in \mA$ and all $v\in L^p(\Sigma)$, $z_{u,v}=G'(u)v\in Y$ is the unique solution of the equation
\[
\frac{\partial z}{\partial t} + A z + \frac{\partial f}{\partial y}(x,t,y_u)z=0\text{ in }Q,\ \partial_{n_A} z + u z=-v y_u\text{ on }\Sigma,\ z(0)=0\text{ in }\Omega.
\]
It is important to notice that for $v\in L^2(\Sigma)$, the above equation has a unique solution in $W(0,T)$ that we will also name $z_{u,v}$. Furthermore, we have that ${z_{u,v}}_{|\Sigma}\in L^{2+2/\dimension}(\Sigma)$.

For every $u\in \mA$ there exists a unique $\varphi_u\in Y$ solution of
\[
-\frac{\partial\varphi}{\partial t} + A^* \varphi + \frac{\partial f}{\partial y}(x,t,y_u)\varphi=y_u-y_d\text{ in }Q,\ \partial_{n_A^*}\varphi + u \varphi=0\text{ on }\Sigma,\ \varphi(T)=0\text{ in }\Omega.
\]
The mapping $\Psi:\mA\to Y$ given by $\Psi(u)=\varphi_u$ is of class $C^1$ and for all $u\in \mA$ and all $v\in L^p(\Sigma)$, $\eta_{u,v}=\Psi'(u)v\in Y$ is the unique solution of the equation
\[
\left\{\begin{array}{l}-\frac{\partial\eta}{\partial t} + A^*\eta+ \frac{\partial f}{\partial y}(x,t,y_u)\eta = \Big[1 - \varphi_u \frac{\partial^2f}{\partial y^2}(x,t,y_u)\Big] z_{u,v}\text{ in }Q,\\
\partial_{n_A^*}\eta +u\eta= -v\varphi_u \text{ on }\Sigma,\qquad \eta(T)=0\text{ on }\Omega.\end{array}\right.
\]
For every $v\in L^2(\Sigma)$, the above equation has a unique solution in $W(0,T)$ that we will also name $\eta_{u,v}$. Furthermore, we have that $\eta_{u,v}\in L^{2+2/\dimension}(\Sigma)$.

The mapping $\mJ:\mA\longrightarrow \mathbb{R}$ is of class $C^2$ and for all $u\in\mA$ and all $v\in L^p(\Sigma)$,
\[\mJ'(u) v=\int_\Sigma(-y_u\varphi_u) v\dx\dt,\qquad \mJ''(u) v^2 = \int_\Sigma(-y_u\eta_{u,v}-z_{u,v}\varphi_u) v\dx\dt.\]

This problem fits into our framework for $X=\Sigma$, $\mu$ equal to the $\dimension$ dimensional Lebesgue measure on $\Sigma$, and $p = 2(\dimension +1)$. The $C^1$ mapping $\Phi:\mA\longrightarrow L^\infty(\Sigma)$ is given by $\Phi(u) = (-y_u\varphi_u)_{|\Sigma}$. We have that $\Phi'(u)v=(-y_u\eta_{u,v}-z_{u,v}\varphi_u)_{|\Sigma}$ for all $u\in\mA$ and all $v\in L^p(\Sigma)$.

Using that $y_u,\varphi_u\in L^\infty(Q)$, $\Phi'(u)$ can be extended to a linear and continuous mapping $\Phi'(u):L^2(\Sigma) \longrightarrow L^2(\Sigma)$. The compactness of this operator follows from the compactness of the trace mapping from $W(0,T)$ into $L^2(\Sigma)$; see \cite[Theorem A.2]{CM2025}. The inequality \eqref{E2.4} is deduced from \cite[Lemma 4.2]{CM2025} and the fact that $p=2(d+1)$. Hence, Assumption \ref{A2.2} holds.

From \cite[Theorem A.3]{CM2025}, 
we deduce that $\Phi'(u)v\in L^{p_i}(\Sigma)$ if $v\in L^{p_{i-1}}(\Sigma)$, for $p_0=2$, $p_1= 3$, $p_2=6=p$ if $\dimension =2$
and $p_0=2$, $p_1 = 8/3$, $p_2= 5$, $p_3=8=p$ if $\dimension = 3$,
and therefore Assumption \ref{A2.3} is satisfied.
 The Lipschitz property of $\Phi'$ required in Assumption \ref{A2.4} follows from the Lipschitz property of $\frac{\partial^2 f}{\partial y^2}$; see \cite[Theorem 3.5]{CM2025}.

The SQP method for this problem is described in Algorithm \ref{Alg3}. Notice that, since $\uad$ is bounded in $L^\infty(\Sigma)$, we can choose $u_0\in\uad\cap B^2_{\rho'}(\bar u)$ for some appropriate $\rho'>0$.

\LinesNumbered
\begin{algorithm2e}
\caption{SQP method to solve $\Pbc$.}\label{Alg3}
\DontPrintSemicolon
Initialize. Choose $u_0\in B^2_r(\bar u)\cap\uad$. Set $n=0$.\;
\For{$n\geq 0$}{
Compute $y_{u_n} = G(u_n)$ solving the nonlinear state equation.
\;
Compute $\varphi_{u_n} = \phi(u_n)$ solving the linear adjoint state equation\;
Find a local solution of the constrained quadratic problem
\begin{align*}
\mathrm{(}Q_n\mathrm{)}\quad & \min_{v\in\uad-\{u_n\}} \frac{1}{2}\int_\Sigma \big(\kappa v -(\varphi_{u_n}z_{u_n,v} + y_{u_n}\eta_{u_n,v})\big) v\dx\dt \\
& \qquad \qquad + \int_\Sigma (\kappa u_n- \varphi_{u_n}y_{u_n}) v\dx\dt
\end{align*}\;
Name $v_n$ the obtained solution.
\;
Set $u_{n+1}=u_n+v_n$ and $n=n+1$.\;
}
\end{algorithm2e}

\section{Numerical examples and discussion}\label{ZZ-S5}

In \cite{Troltz1999}, an SQP method for solving control-constrained control problems is analyzed. Unlike the method presented in this paper, in \cite{Troltz1999} the three variables -- state, adjoint state, and control -- are treated as independent variables related by the state and adjoint state equations, which are viewed as equality constraints. We will refer to this method as \textsc{sqplin}, because only linear PDEs need to be solved, and to our method as \textsc{sqpnln}, because our method requires solving one nonlinear PDE per iteration.

A natural question is whether one of the methods is superior to the other. Since both exhibit quadratic order of convergence, it may seem that \textsc{sqplin}, which requires only the solution of linear equations, should be faster in practice. However, other practical issues can affect the performance of the algorithms. In particular, choosing a good starting point may be more difficult for \textsc{sqplin} than for \textsc{sqpnln}.

Without any pretension of claiming a superior performance of any of the two methods, we compare the speed of execution for two of the examples studied above: $\Pbu$ --elliptic equation, additive distributed control-- and $\Pbc$ --parabolic equation, bilinear boundary control--.

We solve finite element approximations of the problems.
For the space discretization we use a uniform mesh of size $h=2^{-N}$, for some appropriate $N$. Continuous piecewise linear elements are employed for the state and the adjoint state. For $\Pbu$ the control is discretized using piecewise constant elements.
For the time-dependent problem $\Pbc$ we use a discontinuous Galerkin scheme in time, equivalent to the implicit Euler method, with step-size $\tau=2^{-N} T$. In this case the discretization of the control in space is done with continuous piecewise linear functions, and the Tichonov regularization is done using an appropriate mass lumping in space; see \cite{CasasMateosRosch2018} for a detailed explanation of this discretization technique.

The algorithms stop when the maximum of the absolute and relative error between two iterations, measured in the $L^\infty$ norm, is less than $\varepsilon=5\times 10^{-13}$ or when the last two objective values are equal up to machine precision. When both algorithms succeed, we have observed that the relative difference (measured in $L^\infty$) of the obtained solutions is less than $\varepsilon$.

All the software has been programmed by us with \textsc{Matlab} and has been run on a desktop PC with 32GB of RAM and an Intel i7-13700F processor. The reported times are obtained by a simple \texttt{tic-toc} measure. The quadratic problems are solved using a semismooth Newton method. The unconstrained quadratic program that appears at each iteration of the semismooth Newton method is solved using Matlab's implementation \texttt{pcg} of the conjugate gradient method. The nonlinear partial differential equations are solved using Newton's method taking the value of the current state $y$ in memory as initial point, except in the first call, where we take $y=0$. All the linear systems arising from the finite element method are solved using Matlab's {\tt chol} and the backslash. Great careful has been taken in order to avoid factorizing more than once the same matrix. The nonlinear terms are integrated using the 10-point formula for tetrahedrons described in \cite{Shun-Ham2012}.

\begin{example}\label{Ex4.1}
\textit{A semilinear elliptic control problem with distributed control.}
For the problem $\Pbu$ described in Section \ref{ZZ-S4.1}, consider $\dimension = 3$, $\Omega=(0,1)^\dimension$, $Ay = -\Delta y$, $f(x,y)=\mathrm{e}^y$, $L(x,y) = 0.5|y-y_d(x)|^2$, with $y_d(x) = \prod_{i=1}^\dimension 8 x_i(1-x_i)$, $\kappa =0.1$, $\alpha = 0.1$ and $\beta = 1$. The discretization is done with $N=6$ uniform refinements for the cube, i.e., $h=1/64$. We have $1.572.864$ degrees of freedom (dof) for the control and $250.047$ both for the state and the adjoint state.

The initial point is $u_0=(\umin+\umax)/2$ and, for \textsc{sqplin}, $y_0=0$ and $\varphi_0=0$. Our algorithm converges in three iterations and the whole process takes 199 seconds.  The method that we denote \textsc{sqplin} takes four iterations, but only 122 seconds.
\end{example}

\begin{example}\label{Ex4.2}
\textit{A semilinear parabolic bilinear control problem with boundary control.}
The SQP introduced in \cite{Troltz1999} is studied in detail in \cite{CM2025} for problem $\Pbc$. We solve the problem presented in Section 6 of that reference.

We consider $\Omega=(0,1)^\dimension$ for $\dimension = 3$, and $T=4$. The data for the equation are $Ay = -\Delta y$, $f(x,t,y) = y^3-y$, $y_0(x) = \prod_{i=1}^{d}8 x_i(1-x_i)$, and $g(x,t) = 1$. To define the objective functional,
$y_d(x,t) = y_0(x)\cos\left(\pi t\right)$,
and  $\kappa = 0.3$. The control constraints are given by $\umin = 0.1$ and $\umax = 100$.

The discretization is done using $N=5$ uniform refinements of the cube, so the last spatial mesh size is $h=1/32$ and the last time step is $\tau=0.125$. We have $196.672$ degrees of freedom (dof) for the control and $1.149.984$ dof for both the state and the adjoint state.

We use as initial point $u_0=(\umin+\umax)/2$. Our algorithm converges in 6 iterations to the optimal value (about $100$ seconds if $N=4$ and $5\times 10^3$ seconds if $N=5$). The convergence history is shown in Table \ref{Tabla1}. We include the value of the functional, the stepsize measured as $\delta_n = \|v_{n-1}\|_{L^\infty(\Sigma)}/\max\{1,\|u_n\|_{L^\infty(\Sigma)}\}$, and the number of nodes where the constraints are inactive and active.

{\small
\begin{table}
  \centering
  \caption{Convergence history for Algorithm \ref{Alg3}. Problem $\Pbc$.}\label{Tabla1}
\[
\begin{array}{cccccc}
 n &   J (u_n)                 &\delta_n      &\sharp\{\umin<\bar u<\umax\} &  \sharp\{\bar u=\umin\}  & \sharp\{\bar u=\umax\}\\ \hline
  0 &   9.0274091266354717e+03 &              & 196672  &     0    &       0\\
  1 &   1.6728953004109695e+01 &    5.0e+01   &      0  &   196672 &      0 \\
  2 &   1.3529647576662601e+01 &    9.4e-01   & 166942  &    29730 &      0 \\
  3 &   1.3441235676498732e+01 &    2.1e-01   & 165604  &  31068   &      0 \\
  4 &   1.3441100623640869e+01 &    8.4e-03   & 165580  &   31092  &      0 \\
  5 &   1.3441100623224251e+01 &    2.0e-05   & 165580  &   31092  &      0 \\
  6 &   1.3441100623224251e+01 &    1.6e-10   & 165580  &   31092  &      0
\end{array}
\]
\end{table}
}

In contrast, the \textsc{spqlin} algorithm does not converge for $u_0=(\umin+\umax)/2$, $y_0=\varphi_0=0$. The strategy of taking $y_0=y_{u_0}$ and $\varphi_0=\varphi_{u_0}$ does not work for this choice of $u_0$ either. A better initial point is needed. Since we have found the solution with our method, we know that the average value of $\bar u$ is close to $0.6$. So we tried $u_0=0.6$. Again, $y_0=\varphi_0=0$ does not provide a convergent sequence, but for this choice of $u_0$, if we solve the nonlinear state equation and the adjoint state equation once and choose $y_0=y_{u_0}$ and $\varphi_0=\varphi_{u_0}$, we get convergence in 6 iterations (96 seconds with $N=4$ refinements and 4664 seconds with $N=5$ refinements). If we use our method with $u_0=0.6$, the method converges in 5 iterations (92 seconds if $N=4$, 4507 seconds if $N=5$). A more sophisticate search for an initial point for \textsc{SQPLIN} is described in \cite{CM2025}.
\end{example}

\paragraph{Conclusion} Numerical experience shows that the \textsc{spqnln} algorithm can be more computationally efficient than \textsc{spqlin}, which may even fail for convergence where \textsc{spqnln} does not. This is especially true when the state equation is evolutionary.

\appendix
\section{Lipschitz stability in $L^2(X)$}\label{S2.4}
We prove below Lemma \ref{L2.16} used to obtain \eqref{ZZ-E3.17}. This result can be seen as a strong regularity property or Lipschitz stability of solutions of constrained quadratic programs, cf. \cite{Troltz2000}, \cite[Section 4]{Troltz1999}, \cite[Section 5.1]{Hoppe-Neitzel2021}, \cite[Section 5.3]{Hehl-Neitzel2024}. Indeed, this is the analogous of \cite[Theorem 4.2]{Troltz2000}.
\begin{lemma}\label{L2.16}
Let $a:L^2(X)\times L^2(X) \longrightarrow \mathbb{R}$ be a continuous, symmetric, and bilinear form such that there exists $\lambda>0$ satisfying
\[
a(w,w)\geq \lambda\|w\|_{L^2(X)}^2\quad \forall w\in E^\tau_{\bar u}.
\]
Let $b_0$ and $b_1$ be two elements of $L^2(X)$ and
for $i=0,1$, let $w_i \in \widehat{\uad}-\{\bar u\}$ be the solution of the constrained quadratic program
\[(Q_i)\qquad\min_{w\in \widehat{\uad}-\{\bar u\}} \frac{1}{2}a(w,w) + \int_X b_i w\mathrm{d}x.\]
Then, we have
\[
\|w_1-w_0\|_{L^2(X)}\leq \frac{1}{\lambda} \|b_1-b_0\|_{L^2(X)}.
\]
\end{lemma}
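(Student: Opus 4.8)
The plan is to derive the estimate by the classical cross-testing argument for the variational inequalities associated with convex quadratic programs. The decisive preliminary observation is a structural one: every element of the feasible set $\widehat{\uad}-\{\bar u\}$ lies in the subspace $E_{\bar u}^\tau$. Indeed, if $w = u - \bar u$ with $u \in \widehat{\uad}$, then by the very definition of $\widehat{\uad}$ we have $u(x) = \bar u(x)$, and hence $w(x) = 0$, for $x \in X^\tau_{\bar u}$. In particular the difference $\omega := w_1 - w_0$ belongs to $E_{\bar u}^\tau$, which is exactly the set on which the coercivity hypothesis $a(w,w) \ge \lambda\|w\|_{L^2(X)}^2$ is assumed to hold. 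This is the point where the construction of the modified admissible set pays off.

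Next I would write down the first-order optimality condition for each $(Q_i)$. Since the objective is convex and differentiable and the feasible set is convex, $w_i$ is characterized by the variational inequality
\[
a(w_i, w - w_i) + \int_X b_i (w - w_i) \dx \ge 0 \qquad \forall\, w \in \widehat{\uad}-\{\bar u\},
\]
where I have used the symmetry of $a$ to identify the derivative of $\tfrac{1}{2}a(w,w)$ at $w_i$ in the direction $w - w_i$ with $a(w_i, w - w_i)$. The core computation is then to test the inequality for $i=0$ with the admissible choice $w = w_1$ and the one for $i=1$ with $w = w_0$, and to add them. Using bilinearity and symmetry, the two quadratic terms combine to $-a(\omega,\omega)$, and the linear terms combine to $\int_X (b_0 - b_1)\,\omega \dx$, leaving
\[
a(\omega, \omega) \le \int_X (b_0 - b_1)\,\omega \dx.
\]

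To conclude I would estimate both sides. Because $\omega \in E_{\bar u}^\tau$, the coercivity hypothesis gives $a(\omega,\omega) \ge \lambda\|\omega\|_{L^2(X)}^2$ on the left, while the Cauchy--Schwarz inequality bounds the right-hand side by $\|b_1 - b_0\|_{L^2(X)}\|\omega\|_{L^2(X)}$. Combining and dividing by $\|\omega\|_{L^2(X)}$ (the case $\omega = 0$ being trivial) yields the asserted bound $\|w_1 - w_0\|_{L^2(X)} \le \tfrac{1}{\lambda}\|b_1 - b_0\|_{L^2(X)}$. There is no substantial obstacle in this argument; the only step demanding attention is the verification that $\omega$ lies in $E_{\bar u}^\tau$, since the coercivity is assumed only on that subspace and not on all of $L^2(X)$---without this the final chain of inequalities could not even be started.
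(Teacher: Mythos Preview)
Your proof is correct and follows essentially the same route as the paper's: write the variational inequalities for each $(Q_i)$, cross-test, add, and combine the coercivity on $E_{\bar u}^\tau$ with Cauchy--Schwarz. The only cosmetic difference is that you observe upfront that every element of $\widehat{\uad}-\{\bar u\}$ lies in $E_{\bar u}^\tau$, whereas the paper records only that $w_0$ and $w_1$ vanish on $X_{\bar u}^\tau$ and hence so does their difference; both observations lead to the same conclusion.
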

\begin{proof}
  First of all, from the continuity and the coercivity property of the bilinear form and the convexity of $\widehat{\uad}-\{\bar u\}$, it is clear that each problem $(Q_i)$ has a unique solution $w_i\in \widehat{\uad}-\{\bar u\}$. First order optimality conditions for each of these problems reads like
  \begin{eqnarray*}
    a(w_i,w-w_i) + \int_X b_i(w-w_i)\mathrm{d}x &\geq & 0\ \forall w\in \widehat{\uad}-\{\bar u\},\ \ i=0, 1.
  \end{eqnarray*}
  Testing the inequality satisfied by $w_i$ with $w_{1-i}\in \widehat{\uad}-\{\bar u\}$ , we obtain
  \begin{eqnarray*}
        a(w_1,w_0-w_1) +  \int_X b_1(w_0-w_1)\mathrm{d}x &\geq & 0\\
        a(w_0,w_1-w_0) +  \int_X b_0(w_1-w_0)\mathrm{d}x &\geq & 0\\
  \end{eqnarray*}
  Adding this inequalities and grouping and reordering the terms, we obtain
  \[a(w_1-w_0,w_1-w_0)\leq  \int_X (b_1-b_0)(w_0-w_1)\mathrm{d}x.\]
  Since $w_0 = w_1=0$  a.e. $[\mu]$ in $X^\tau_{\bar u}$, we deduce that $w_1-w_0\in E^\tau_{\bar u}$.
  Using the coercivity of the bilinear form on $E^\tau_{\bar u}$ and the Schwarz inequality, the result follows immediately.
\end{proof}


\end{document}